\pgfplotsset{compat=1.15}
\definecolor{bluepigment}{rgb}{0.2, 0.2, 0.6}
\definecolor{amethyst}{rgb}{0.6, 0.4, 0.8}
\definecolor{asparagus}{rgb}{0.53, 0.66, 0.42}
\definecolor{orange-red}{rgb}{1.0, 0.27, 0.0}
\definecolor{coffee}{rgb}{0.88, 0.82 ,0.77}
\definecolor{champagne}{rgb}{0.97, 0.91, 0.81}
\definecolor{deepchampagne}{rgb}{0.98, 0.84, 0.65}
\definecolor{bittersweet}{rgb}{0.8, 0.58, 0.46}
\definecolor{antiquebrass}{rgb}{0.8, 0.58, 0.46}
\definecolor{burlywood}{rgb}{0.87, 0.72, 0.53}
\definecolor{darkchampagne}{rgb}{0.76, 0.7, 0.5}
\definecolor{bisque}{rgb}{1.0, 0.89, 0.77}
\definecolor{biscuit}{RGB}{244, 164, 96}
\definecolor{darkchestnut}{rgb}{0.6, 0.41, 0.38}
\newcommand{\Z}{\mathbb{Z}}
\newcommand{\R}{\mathbb{R}}
\newcommand{\calA}{\mathcal{A}}
\newcommand{\calB}{\mathcal{B}}
\newcommand{\calC}{\mathcal{C}}
\newcommand{\calE}{\mathcal{E}}
\newcommand{\calN}{\mathcal{N}}
\newcommand{\calX}{\mathcal{X}}
\newcommand{\floor}[1]{{\left\lfloor #1 \right\rfloor}}
\DeclareMathOperator{\E}{\mathbb{E}}
\renewcommand{\P}{\operatorname{\mathbb{P}}}
\newcommand{\e}{\mathrm{e}}
\renewcommand{\bar}[1]{\overline{#1}}
\let\oldepsilon\epsilon
\let\epsilon\varepsilon
\let\varepsilon\oldepsilon
\renewcommand{\phi}{\varphi}
\theoremstyle{plain}
\newtheorem{theorem}{Theorem}[section]
\newtheorem{lem}[theorem]{Lemma}
\newtheorem{prop}[theorem]{Proposition}
\theoremstyle{definition}
\numberwithin{equation}{section}
\theoremstyle{remark}
\newtheorem{remark}[theorem]{Remark}
\title{Coexistence for competing branching random walks with identical asymptotic shape on $\mathbb{Z}^d$}
\author{
Partha Pratim Ghosh
    \orcidlink{0000-0002-4801-4538}
    \thanksgap{0.05ex}
    \thanks{Fakultät für Mathematik, Ruhr-Universität Bochum,
Universitätsstraße 150,
44801 Bochum, Germany}
    \\p.pratim.10.93@gmail.com
\and
Benedikt Jahnel 
    \orcidlink{0000-0002-4212-0065}
    \thanksgap{0.05ex}
    \thanks{Institut für Mathematische Stochastik, Technische Universit\"at Braunschweig, Universit\"atsplatz 2, 38106 Braunschweig, Germany}
    \thanksgap{0.2ex} 
    \thanks{Weierstrass Institute for Applied Analysis and Stochastics, Anton-Wilhelm-Amo-Straße 39, 10117 Berlin, Germany} 
    \\ benedikt.jahnel@tu-braunschweig.de
}
\date{\today}
\begin{document}
\maketitle

\begin{abstract}
We consider two independent branching random walks that start next to each other on the $d$-dimensional hypercubic lattice and that carry two different colors. Vertices of the lattice are colored according to the color of the walker cloud that first visits the vertex, leading to the question of possible coexistence in the sense that both colors appear on infinitely many vertices.  Under mild conditions, we prove the coexistence for two independently distributed branching random walks obeying the same first- and second-order behavior for their extremal particles. To complement this result, we also exhibit examples for the almost-sure absence of coexistence, for $d=1$, in cases where the asymptotic shapes of the walker clouds are calibrated to coincide, thereby answering a question by Deijfen and Vilkas (ECP  28(15):1--11, 2023). As a main tool we employ second-order and large-deviation approximations for the position of the extremal particles in one-dimensional branching random walks. 

\smallskip
\noindent\footnotesize{{\textbf{AMS-MSC 2020}: Primary: 60D05; Secondary: 60K35}

\smallskip
\noindent\textbf{Key Words}: branching random walks, extremal particles, logarithmic fluctuations}
\end{abstract}

\section{Introduction}\label{Sec:intro}
{\em Branching random walks} (BRWs) form a canonical model for populations that both reproduce and disperse in space.  In the simplest one‐type setting on the lattice \(\Z^d\), each particle independently produces a random number of offsprings according to a given offspring distribution and places them at random displacements from its parent’s location.  Over the past decades a rich collection of limit theorems has been obtained for the growth and spatial spread of a single BRW. In particular, {\em laws of large numbers} and {\em shape theorems}~\cite{Bigg76,Hammersley1974,Kingman1975,Biggins1990}, {\em central limit theorems} for the front position~\cite{Biggins1992,Aidekon2013,Mallein2016},
and {\em large‐deviation principles} for generation sizes and maximal displacements~\cite{GantertDyszewskiHofelsauer2023,GantertHofelsauer2018,LouidorPerkins2015,Zhang2022,ShiWanlin2019}.

In many natural applications such as ecology, epidemiology, and genetics, different {\em types} of particles compete for resources or territory.  This motivates the studies of various types of {\em competing branching random walks}, in which two (or more) colors of particles branch and disperse as in a BRW. In this article, we focus on a version of competition in which any lattice site is colored according to the color of the particle that first visits that site, but otherwise BRWs for different colors behave independently. A first rigorous analysis of such competition on \(\Z^d\) was carried out in~\cite{DeVi23} for two different colors, say blue and red, and in situations in which the first-order behavior of the blue, respectively red, BRWs are different. More precisely, using the fact that BRWs have an asymptotic linear speed in every direction (in fact they even obey a shape theorem), they show that the existence of at least one advantageous direction in which, e.g., blue has a strictly larger asymptotic speed is enough to ensure that blue colors infinitely many site. {\em Coexistence} in the sense that both BRWs color infinitely many sites simultaneously can be guaranteed if both colors have at least one advantageous direction. If one color dominates in the sense that all direction have a strictly larger asymptotic speed, there is no coexistence in the sense that the slower color can only color finitely many sites with probability one, see~\cite[Proposition~1.2]{DeVi23}. 

In this manuscript we complement the findings above by considering cases in which the two colors have the same asymptotic speed in all directions. Our contribution is twofold:
\begin{enumerate}
    \item We show that, under rather general conditions, coexistence happens with probability one if the two BRWs are independently distributed and their extremal particles obey the same first and second-order behavior. This proves a corresponding conjecture in~\cite[Page~5]{DeVi23}.
    \item Already in one spatial dimension, second-order fluctuations in the extrema of the two processes are enough to guarantee non-coexistence.
\end{enumerate}
Our method of proof for coexistence is based on large-deviation bounds on atypically large excursions from the typical position of the extremal particles.  
More precisely, we show that in one spatial dimension,  for every $0<c_1<c_2$, almost~surely, for all sufficiently large $z$, there will be points of both colors in the interval $[\exp(c_1z),\exp(c_2z)]$. As a byproduct of our results we also establish that, for a one-dimensional BRW, if the underlying progeny point process contains finitely many points almost~surely, every individual born in the BRW has a descendant that reaches the right-most position at some future time, a result first proved in~\cite{LaSe87} for branching Brownian motions.
For the non-coexistence we exhibit a class of models that allow us to match the first-order behavior, i.e., the asymptotic speeds, while still creating a gap in the logarithmic second-order behavior. The main challenge that we then overcome is to ensure that potential holes in the annulus of sites between the extremal particles of both colors, are still colored first by the dominating BRW.

The manuscript is organized as follows. In the following Section~\ref{Sec:set} we first present the framework of BRWs based on a point-process description of the offspring distribution and our general assumptions. The main theorems then feature our coexistence and non-coexistence results. Finally, Section~\ref{sec:proof} contains all proofs.

\section{Setting and main results}\label{Sec:set}

Consider two time-discrete {\em branching random walks} (BRWs) on $\Z^d$, one colored red and the other blue, starting with a single red particle at
the origin $\boldsymbol{o}\in \Z^d$ and a single blue particle at $\boldsymbol{b}\in \Z^d$. The
evolutions of the two BRWs are governed by their offspring distributions, which
we assume to be (not necessarily simple) point processes
$\Phi := \sum_{j \ge 1} \boldsymbol{\delta}_{\chi_j}$
    and
    $\Psi := \sum_{j \ge 1} \boldsymbol{\delta}_{\varsigma_j}$
    on  $\Z^d$,
where $\boldsymbol{\delta}_x$ denotes the Dirac measure at $x$.
Let $\boldsymbol{e}$ be a unit vector in $\R^d$. We define the point processes obtained by projecting the offspring positions along
$\boldsymbol{e}$ as 
$\Phi_{\boldsymbol{e}}:= \sum_{j\geq 1}\boldsymbol{\delta}_{\langle \chi_j,\boldsymbol{e} \rangle}$ and $\Psi_{\boldsymbol{e}}:= \sum_{j\geq 1}\boldsymbol{\delta}_{\langle \varsigma_j,\boldsymbol{e} \rangle}$.

Next, in order to present further notation and conditions for the projected point processes $\Phi_{\boldsymbol{e}},\Psi_{\boldsymbol{e}}$, consider a point process $\Xi := \sum_{j \ge 1} \boldsymbol{\delta}_{\xi_j}$ on $\R$. The distribution of $\Xi$ is completely determined by its log-Laplace transform
\[
\kappa_\Xi(\theta):=  \log \E\Big[ \sum_{j\geq 1} \e^{\theta \xi_j} \Big],\qquad \theta\in \R.
\]
In order to describe the drift and variance of the process, we also introduce
\[
\kappa'_{\Xi}(\theta):=  \E\Big[ \sum_{j\geq 1} \xi_j \e^{\theta \xi_j-\kappa_{\Xi}(\theta)}\Big]\quad\text{and}\quad \kappa''_{\Xi}(\theta):= \E\Big[ \sum_{j\geq 1} \left(\xi_j-\kappa'_{\Xi}(\theta) \right)^2 \e^{\theta \xi_j-\kappa_{\Xi}(\theta)}\Big],\qquad \theta\in \R.
\]
Note that, under the assumption that $\kappa_{\Xi}$ is finite in an open neighborhood of $\theta$, by Lebesgue’s dominated convergence theorem, one readily checks that $\kappa_\Xi'(\theta)$ and $\kappa_\Xi''(\theta)$ exist, are finite and are indeed the first and second derivatives of $\kappa_\Xi$ at $\theta$. 

Throughout this work we assume that there exists a unit vector $\boldsymbol{e}\in\R^d$
such that each $\Xi \in \{\Phi_{\boldsymbol{e}}, \Psi_{\boldsymbol{e}}\}$ satisfies
the following technical conditions.
\begin{itemize}[leftmargin=1.5cm]
		\item[{\bf (A1)}] 
		$\Xi$ is \emph{non-trivial}, and the \emph{extinction probability} of the underlying \emph{branching process} is $0$. In other words,  
$\P( \Xi(\{a\}) = \Xi(\R) ) < 1$ for any $a \in \R$,  
$\P( \Xi(\R) = 0 ) = 0$, and  
$\P( \Xi(\R) = 1 ) < 1$.

        \item[{\bf (A2)}]
There exist $\eta > 0$ and $\theta_\Xi > 0$ such that 
$\kappa_\Xi(\theta) < \infty$ for all $\theta \in (-\eta, \theta_{\Xi} + \eta)$, 
$\kappa_\Xi'(\theta_{\Xi}) > 0$, and 
$\theta_{\Xi} \kappa_\Xi'(\theta_{\Xi}) = \kappa_\Xi(\theta_{\Xi})$.

        \item[{\bf (A3)}]
		The quantities $\E[W((\log W)_+)^2 ]$ and $\E[\bar{W}(\log\bar{W})_+]$ are finite, where  $W=\sum_{j\geq1}\e^{\theta_{\Xi} \xi_j-\kappa_\Xi(\theta_{\Xi})}$, \linebreak $\bar{W}=\sum_{j\geq1} (\kappa_\Xi'(\theta_{\Xi})-\xi_j)_+\e^{\theta_{\Xi} \xi_j-\kappa_\Xi(\theta_{\Xi})}$, and $x_+=\max(x,0)$.
\end{itemize}

\begin{remark}
   Note that Condition~{\bf (A1)} implies that $\kappa_\Xi$ is strictly convex. Consequently, if $\theta_{\Xi} \in (0,\infty)$ satisfying Condition~{\bf (A2)} exists, it is the unique point in $(0, \infty)$ such that a tangent line from the origin to the graph of $\kappa_\Xi(\theta)$ touches the graph at $\theta = \theta_{\Xi}$. Moreover, if $\E[\Xi(\R)^{1+\varepsilon}]$ is finite for some $\varepsilon > 0$, then Condition~{\bf (A3)} is also satisfied.
\end{remark}

\subsection{Coexistence}
We consider two independent time-discrete branching random walks
$\mathcal{X}^{(r)}$ and $\mathcal{X}^{(b)}$ on $\Z^d$ with starting positions
$\mathcal{X}^{(r)}_0 = \boldsymbol{\delta}_{\boldsymbol{o}}$ and
$\mathcal{X}^{(b)}_0 = \boldsymbol{\delta}_{\boldsymbol{b}}$ for
$\boldsymbol{b} \in \Z^d$, which are driven by the offspring distributions
$\Phi$ and $\Psi$, respectively, satisfying Conditions {\bf (A1)--(A3)} above for some $\boldsymbol{e}$.
Let $R \subseteq \Z^d$ and $B \subseteq \Z^d$ denote the sets of vertices that are
first visited by $\calX^{(r)}$ and $\calX^{(b)}$, respectively, where an arbitrary
tie-breaking rule is applied if a vertex is discovered simultaneously by both
BRWs. Our results do not depend on the choice of the tie-breaking rule.
 We may now state our first main result.
\begin{theorem}(Coexistence)\label{thm:coexistence}
Whenever
$\theta_{\Phi_{\boldsymbol{e}}}=\theta_{\Psi_{\boldsymbol{e}}}$ and
$\kappa_{\Phi_{\boldsymbol{e}}}(\theta_{\Phi_{\boldsymbol{e}}})
 = \kappa_{\Psi_{\boldsymbol{e}}}(\theta_{\Psi_{\boldsymbol{e}}})$,
we have, for all $\boldsymbol{b}\in\Z^d$, that
\[
    \P(R \text{ is infinite}) = \P(B \text{ is infinite}) = 1.
\]
In particular, for every $0 < c_1 < c_2$, almost~surely, for all sufficiently large $z$, the set $\{\boldsymbol{a} \in \mathbb{Z}^d \colon \exp(c_1 z)\le \langle \boldsymbol{a}, \boldsymbol{e} \rangle \le  \exp(c_2 z)\}$ contains vertices of both colors.
\end{theorem}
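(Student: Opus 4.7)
The plan is to reduce the $d$-dimensional slab statement to a one-dimensional one by projecting along $\boldsymbol{e}$. A vertex $\boldsymbol{a}\in\Z^d$ with $\langle\boldsymbol{a},\boldsymbol{e}\rangle\in[\e^{c_1z},\e^{c_2z}]$ is colored by whichever projected BRW, $\Phi_{\boldsymbol{e}}$ or $\Psi_{\boldsymbol{e}}$, first places a particle at that projection; hence it suffices to exhibit, almost surely for all large $z$, a first-visited projection of each color inside $[\e^{c_1z},\e^{c_2z}]$. Under the hypothesis, the two projected BRWs share the critical parameter $\theta_*$, the speed $v:=\kappa'(\theta_*)$, and, via second-order asymptotics of Aïdékon--Mallein type (which (A1)--(A3) are designed to yield), the logarithmic correction: both rightmost-particle processes satisfy $M_n-m_n\Rightarrow$ a common tight non-degenerate limit, where $m_n:=vn-\tfrac{3}{2\theta_*}\log n$.

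Next I would establish quantitative upper- and lower-tail estimates $\P(M_n-m_n\ge A)\le f(A)$ and $\P(m_n-M_n\ge A)\le f(A)$, uniformly in $n$, with $f(A)$ decaying fast. The upper tail follows from Condition (A2) via a many-to-one / Chebyshev argument; the lower tail uses Condition (A3), the convergence of the derivative martingale, and a spine change-of-measure. Together with the regeneration \emph{byproduct} announced in the introduction---every alive individual has a descendant that reaches the rightmost position at some future time---these bounds give an approximately independent block decomposition of the difference process $M_n^{r}-M_n^{b}$: on blocks of polynomial length $\ell_z$ well inside the time window $[n_1(z),n_2(z)]:=[\lceil\e^{c_1z}/v\rceil,\lceil\e^{c_2z}/v\rceil]$, the events $\{M_n^{r}>M_n^{b}+A\}$ and $\{M_n^{b}>M_n^{r}+A\}$ each occur with probability bounded below by some $\delta>0$, uniformly in $z$, by the distributional match and non-degeneracy of the limit.

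The final step is a Borel--Cantelli argument. The time window contains $\sim\e^{c_2 z}/(v\ell_z)$ essentially independent blocks. On the event that both fronts remain inside $W_z:=[\e^{c_1z},\e^{c_2z}]$ throughout (whose complement has summable probability in $z$ by the tail estimates), any block in which, say, red leads by more than the log tolerance produces a new red-colored $\Z^d$-vertex in $W_z$, because no blue particle can then be present at the rightmost red projection. A geometric-trials argument yields failure probabilities that are super-exponentially small in $z$; summing over $z$ and applying the first Borel--Cantelli lemma delivers the slab statement and, in particular, $|R|=|B|=\infty$ almost surely.

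The main obstacle will be to upgrade the distributional match between $M_n^{r}-m_n$ and $M_n^{b}-m_n$ into uniform, quantitative leader-change probabilities, since the two BRWs are not Markov at the level of their extrema. This requires the regeneration argument based on the spine-descendant byproduct, together with non-degeneracy of the limit law of $M_n-m_n$ (which itself relies on (A1)--(A3) via the arguments of Aïdékon, Bramson and Mallein). Everything else is routine bookkeeping, and the existence of $\Z^d$-vertices with the required projection is automatic because $W_z$ contains exponentially many integer projections that are actively populated by the BRW at each relevant time.
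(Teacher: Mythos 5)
Your high-level plan---project onto $\boldsymbol{e}$, use the matching second-order behavior to find scales at which one color strictly leads and therefore colors a fresh site, then Borel--Cantelli over a sparse family of scales---is the right one and is the spirit of the paper's argument. However, two steps in your mechanism are genuinely broken. First, you conclude that a block in which $M_n^{(r)}>M_n^{(b)}+A$ ``produces a new red-colored vertex \ldots because no blue particle can then be present at the rightmost red projection.'' Coloring is determined by \emph{first} visits, so what you must beat is not blue's current front but blue's \emph{running maximum} $\sup_{k\le n}M_k^{(b)}$; an instantaneous lead of size $A$ says nothing about whether blue visited that hyperplane at an earlier time. The paper resolves this by working at the stopping time $T^{(r)}(z)$ at which red sets a \emph{record} overshoot of size $z$ above the centering $m_n$ (an event occurring around time $\e^{\theta_o z}$), and then proving (Proposition~\ref{Prop:Maximum-advantage-of-blue-before-Trz}) that with probability $1-\e^{-C\delta z}$ the independent blue process has never exceeded $m_{T^{(r)}(z)}+\delta z$ at \emph{any} earlier time --- a uniform control of blue's entire history, not of a single time slice.

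Second, your ``approximately independent block decomposition'' of $M_n^{(r)}-M_n^{(b)}$ with a geometric-trials bound does not hold: the recentered front of a BRW has long-range dependence (the front at time $n+\ell$ descends from particles near the front at time $n$, and the derivative-martingale limit induces a persistent random shift), so the events $\{M_n^{(r)}>M_n^{(b)}+A\}$ over disjoint polynomial-length time blocks are far from independent, and no super-exponential failure bound follows. The independence the paper actually exploits is genealogical, not temporal: to show that an overshoot of size $(1-\delta)z$ \emph{does} occur by time $\e^{\theta_o z}$ (inequality~\eqref{dis_3b}), it branches at generation $\lfloor\alpha z\rfloor$ into $\e^{\lfloor\alpha z\rfloor}$ i.i.d.\ subtrees and applies the Hu--Shi lower bound to each; the complementary upper bound~\eqref{dis_3} needs no independence at all, only a union bound over $\e^{\theta_o z}$ times of Madaule's uniform tail estimate $\P(M_n-m_n>x)\lesssim (1+x)\e^{-\theta_o x}$. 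Relatedly, you invoke the ``every individual has a descendant at the front'' statement as an input to your regeneration scheme, but in the paper this is Theorem~\ref{Thm:Democracy}, a \emph{corollary} of the coexistence machinery (it is proved from Proposition~\ref{Prop:Maximum-advantage-of-blue-before-Trz}), so using it here is circular unless you supply an independent proof. Finally, note that no convergence in law of $M_n-m_n$ is needed anywhere; the tail bounds suffice.
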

\begin{remark}
As a special case, coexistence occurs when $\Phi$ and $\Psi$ are identically distributed.
\end{remark}

Let $X^{(r)}$ and $X^{(b)}$ denote the projections of $\mathcal{X}^{(r)}$ and $\mathcal{X}^{(b)}$, respectively, along the vector $\boldsymbol{e}$. 
Then $X^{(r)}$ and $X^{(b)}$ are one-dimensional BRWs driven by the offspring
distributions $\Phi_{\boldsymbol{e}}$ and $\Psi_{\boldsymbol{e}}$, with initial
configurations $X^{(r)}_0 = \boldsymbol{\delta}_0$ and
$X^{(b)}_0 = \boldsymbol{\delta}_{\varrho}$, where
$\varrho := \langle \boldsymbol{b}, \boldsymbol{e} \rangle$.
When
\[
\theta_{\Phi_{\boldsymbol{e}}} = \theta_{\Psi_{\boldsymbol{e}}}, \qquad
\kappa_{\Phi_{\boldsymbol{e}}}(\theta_{\Phi_{\boldsymbol{e}}})
  = \kappa_{\Psi_{\boldsymbol{e}}}(\theta_{\Psi_{\boldsymbol{e}}}), \qquad
\kappa'_{\Phi_{\boldsymbol{e}}}(\theta_{\Phi_{\boldsymbol{e}}})
  = \kappa'_{\Psi_{\boldsymbol{e}}}(\theta_{\Psi_{\boldsymbol{e}}}),
\]
we write $\theta_o$, $\kappa(\theta_o)$, and $\kappa'(\theta_o)$ for these
common values, for notational convenience.

The proof of Theorem~\ref{thm:coexistence} is based on a large-deviation analysis for the unlikely displacement of the right-most particles in $X^{(b)}$, respectively $X^{(r)}$, at time $n\ge 0$, which we denote by 
$M_n^{(b)}$, respectively $M_n^{(r)}$. More precisely, we consider a {\em centering} of the right-most particles, at time $n\ge 0$, defined as
\[
m_n:=n\kappa'(\theta_o)-\frac{3}{2\theta_o}\log n.
\]
Further, we consider stopping times for the large-deviation event of overshooting the centering, i.e., 
\[
T^{(r)}(z):=\inf\{n\geq 0\colon  M_n^{(r)}-m_n > z \}
\qquad
\text{and}
\qquad
T^{(b)}(z):=\inf\{n\geq 0\colon M_n^{(b)}-m_n > z \}.
\]
The following statement establishes the asymptotic behavior of $T^{(r)}(z)$ in the limit of large $z$, which maybe of independent interest. 
\begin{prop}(First overshoot)\label{prop:Tr}
We have that almost~surely,
\begin{align*}
    \lim_{z\uparrow\infty} \frac{1}{z}\log T^{(r)}(z)=\theta_o.
\end{align*}
\end{prop}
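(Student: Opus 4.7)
The plan is to prove the proposition by establishing matching almost-sure bounds $\liminf_{z\to\infty}\tfrac{1}{z}\log T^{(r)}(z)\ge\theta_o$ and $\limsup_{z\to\infty}\tfrac{1}{z}\log T^{(r)}(z)\le\theta_o$. Both directions hinge on the following sharp two-sided tail estimate on the centered maximum: under~\textbf{(A1)--(A3)} there exist constants $0<c_-\le c_+<\infty$ such that, for $n$ and $z$ in a suitable large regime,
\[
c_-\, \e^{-\theta_o z}\;\le\;\P\bigl(M_n^{(r)}-m_n>z\bigr)\;\le\;c_+\, \e^{-\theta_o z}.
\]
The upper half is the standard ballot-refined many-to-one estimate, for which the logarithmic correction $-\tfrac{3}{2\theta_o}\log n$ in $m_n$ is precisely tuned to yield this rate. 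The lower half comes from Aïdékon's distributional convergence $M_n-m_n\Rightarrow G$ to a Gumbel-type variable with tail $\P(G>z)\asymp \e^{-\theta_o z}$, whose non-degeneracy is guaranteed by the integrability conditions in~\textbf{(A3)}.

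The $\liminf$ direction is the short one. For any $\varepsilon>0$, the union bound combined with the upper half of the tail estimate gives
\[
\P\bigl(T^{(r)}(z)\le \e^{(\theta_o-\varepsilon)z}\bigr)\;\le\;\sum_{n=0}^{\lfloor \e^{(\theta_o-\varepsilon)z}\rfloor}\P\bigl(M_n^{(r)}-m_n>z\bigr)\;\le\;C\, \e^{-\varepsilon z},
\]
which is summable in $z\in\N$. Borel--Cantelli, together with the monotonicity of $z\mapsto T^{(r)}(z)$, yields the bound after letting $\varepsilon\downarrow 0$ along a countable sequence.

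The $\limsup$ direction is the heart of the argument. Fix $\varepsilon>0$ and set $N_z:=\lfloor \e^{(\theta_o+\varepsilon)z}\rfloor$; we must show that $T^{(r)}(z)\le N_z$ almost surely for all $z$ sufficiently large. Because consecutive overshoot events are strongly positively correlated, a direct second-moment attack on $\#\{n\le N_z\colon M_n^{(r)}-m_n>z\}$ does not close. The plan is a subtree decomposition at an intermediate generation $n_0=n_0(z)$ with $n_0\to\infty$ and $n_0\ll N_z$, e.g.\ $n_0=\lfloor z^2\rfloor$: conditional on the configuration $(S_u)_{|u|=n_0}$, the subtrees $\calT_u$ are independent copies of the original BRW shifted by $S_u$, and the event $\{T^{(r)}(z)>N_z\}$ is contained in the intersection over $|u|=n_0$ of the events that the centered max of $\calT_u$ remains below $z-(S_u-n_0\kappa'(\theta_o))$ throughout $[1,N_z-n_0]$, up to a vanishing correction from comparing $m_n$ with $m_{n-n_0}$. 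Applying the lower tail to each $\calT_u$ at a well-chosen generation and using the critical relation $\theta_o\kappa'(\theta_o)=\kappa(\theta_o)$ from~\textbf{(A2)}, together with conditional independence across $u$ and $\log(1-x)\le -x$, yields
\[
\P\bigl(T^{(r)}(z)>N_z\,\big|\,\calF_{n_0}\bigr)\;\le\;\exp\bigl(-c_-\, \e^{-\theta_o z}\, W_{n_0}\bigr),
\]
where $W_{n_0}=\sum_{|u|=n_0}\e^{\theta_o S_u-n_0\kappa(\theta_o)}$ is the additive martingale, converging almost surely to a finite positive $W_\infty$ under~\textbf{(A3)}.

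The principal technical obstacle is that this estimate is weak by itself, since $W_\infty$ is tight rather than growing. The amplification strategy is to chain the subtree argument across $\approx N_z/n_0(z)$ successive blocks of length $n_0(z)$ inside $[0,N_z]$, using the byproduct of the paper that every individual in the BRW has a descendant eventually becoming the right-most particle, together with independent re-starts of sub-BRWs across block boundaries, to extract genuinely independent overshoot trials per block. The resulting bound $\P(T^{(r)}(z)>N_z)\le\exp(-c\, \e^{\varepsilon z}/n_0(z))$ is summable in $z\in\N$, so Borel--Cantelli together with monotonicity closes the proof after letting $\varepsilon\downarrow 0$ along a countable sequence.
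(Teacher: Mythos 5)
Your $\liminf$ direction is correct and coincides with the paper's argument: a union bound over $n\le \e^{(\theta_o-\epsilon)z}$ against the uniform upper tail $\P(M_n^{(r)}-m_n>z)\le C(1+\theta_o z)\e^{-\theta_o z}$ (Madaule's estimate, i.e.\ part~\eqref{dis_3} of Lemma~\ref{Lem:upper-bound-of-upper-deviation}; note the linear prefactor, which your two-sided bound drops but which is harmless here) gives a bound summable in $z\in\N$, and Borel--Cantelli plus monotonicity of $T^{(r)}$ finishes.

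The $\limsup$ direction has a genuine gap, and you have in fact located it yourself: the one-generation decomposition only yields $\exp(-c_-\e^{-\theta_o z}W_{n_0})\approx 1-c_-\e^{-\theta_o z}W_\infty$, which does not tend to $0$. The proposed repair --- chaining $\approx N_z/n_0$ ``independent overshoot trials per block'' --- does not work as described. The sub-BRWs rooted at generation $jn_0$ and those rooted at generation $(j+1)n_0$ live on nested portions of the same tree, so there is no meaningful construction of ``independent re-starts across block boundaries''; and invoking the descendant-becomes-rightmost statement (Theorem~\ref{Thm:Democracy}) is circular, since in the paper that theorem is itself a consequence of Proposition~\ref{Prop:Maximum-advantage-of-blue-before-Trz}, which rests on the very estimate you are trying to prove --- and it is in any case qualitative, providing no per-block success probability. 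The correct quantitative input is not the single-time lower tail $\P(M_n-m_n>z)\gtrsim\e^{-\theta_o z}$ (whose uniformity in $z$ growing with $n$ is itself not immediate from Aïdékon's fixed-$z$ convergence), but the Hu--Shi estimate on the \emph{running maximum}: for every $\epsilon>0$ and $\ell_1=\e^{\theta_o z}/4$,
\[
\P\Big(\max_{\ell_1\le n<2\ell_1}\,(M_n-m_n)\ \ge\ (1-\delta/2)z\Big)\ \ge\ \e^{-\epsilon\theta_o z},
\]
i.e.\ the probability that the maximum over an exponentially long time window reaches level $(1-\delta/2)z$ is only sub-exponentially small, far larger than $\e^{-\theta_o z}$. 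One then decomposes at generation $\floor{\alpha z}$ with $\alpha\propto\delta$ \emph{small} --- your choice $n_0=\floor{z^2}$ would also fail at this point, since the positional and time-shift penalty $n_0(\Lambda+\kappa'(\theta_o))$ is of order $z^2$ and overwhelms the target level $z$ --- secures $\e^{\floor{\alpha z}}$ genuinely independent subtrees started not too far to the left via a Galton--Watson large-deviation bound, and obtains a failure probability $(1-\e^{-\epsilon\theta_o z})^{\e^{\floor{\alpha z}}}\le\exp(-\e^{(\alpha-\epsilon\theta_o)z/2})$ for $\epsilon$ small. This is exactly part~\eqref{dis_3b} of Lemma~\ref{Lem:upper-bound-of-upper-deviation}, and it is the step your proposal is missing.
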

\begin{remark}
As will become clear from the proof, Proposition~\ref{prop:Tr} applies to any
point process $\Xi$ on $\R$ satisfying  {\bf (A1)--(A3)}; in
particular, it does not rely on $\Xi$ being obtained as the projection of a
point process on $\Z^d$. This result is the BRW analogue of
the corresponding theorem for branching Brownian motion,
see~\cite[Theorem~1.1]{Chen13}. 
\end{remark}

As a byproduct of our analysis, we obtain the following result, which is the BRW
analogue of the corresponding statement for branching Brownian motion first
proved in~\cite{LaSe87}.
\begin{theorem}\label{Thm:Democracy}
Let $\Xi$ be any point process on $\R$ satisfying  {\bf (A1)--(A3)}.
Then, for the one-dimensional BRW driven by $\Xi$, almost~surely every
individual born in the process has a descendant that reaches the right-most position at some future time.
\end{theorem}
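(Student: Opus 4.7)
The plan is to combine Proposition~\ref{prop:Tr} applied to the subtree rooted at $u$ with a uniform exponential tail bound for the centered rightmost, and to conclude via the first Borel--Cantelli lemma.

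\textbf{Reduction.} Fix a deterministic individual $u$ born at generation $n_u$ with position $X_u$; since there are only countably many individuals, it suffices to prove the statement for this fixed $u$. Condition~(A2) yields $\kappa_\Xi(0)<\infty$, hence $\E[\Xi(\R)]<\infty$, so the population $v_0=u,v_1,\dots,v_K$ alive at time $n_u$ is almost surely finite. Conditionally on $\mathcal F_{n_u}$, the subtrees rooted at the $v_i$ are independent copies of the one-dimensional BRW driven by $\Xi$, spatially shifted by $X_{v_i}$; write $\tilde M^{(i)}_n$ for the rightmost of the $i$-th subtree at relative time $n$, and set $C:=\max_{i,j}|X_{v_i}-X_{v_j}|$, which is an $\mathcal F_{n_u}$-measurable finite constant.

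\textbf{First overshoots of $u$'s subtree.} Define $T(z):=\inf\{n\ge 0:\tilde M^{(0)}_n-m_n>z\}$. By Proposition~\ref{prop:Tr} applied to $u$'s subtree (valid by the remark that follows it), $T(z)<\infty$ a.s.\ for every $z>0$ with $z^{-1}\log T(z)\to\theta_o$. On $\{T(z)=n\}$ we have $X_{v_0}+\tilde M^{(0)}_n>X_{v_0}+m_n+z$, so $u$'s subtree contains the global rightmost at time $n_u+T(z)$ provided $\tilde M^{(i)}_{T(z)}-m_{T(z)}<z-C$ holds for every $i\ne 0$. Because the competitor subtrees are independent of $u$'s subtree, and $T(z)$ is a stopping time depending only on the latter, conditionally on $T(z)=n$ the vector $(\tilde M^{(i)}_n-m_n)_{i\ne 0}$ is distributed as $K$ independent copies of $M_n-m_n$. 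A uniform-in-$n$ exponential tail bound
\[
\P(M_n-m_n>y)\;\le\;C_1\,\e^{-\theta_o y},\qquad n\ge 1,\;y\ge 0,
\]
--- a standard by-product of the truncated-second-moment / derivative-martingale methodology that underlies Condition~(A3) --- then gives
\[
\P\bigl(\exists\, i\ne 0:\ \tilde M^{(i)}_{T(z)}-m_{T(z)}\ge z-C\ \big|\ \mathcal F_{n_u}\bigr)\;\le\;K\,C_1\,\e^{-\theta_o(z-C)}.
\]
The right-hand side is summable in $z\in\N$, so the first Borel--Cantelli lemma yields that, almost surely, for all sufficiently large integer $z$, the subtree of $u$ uniquely contains the global rightmost at time $n_u+T(z)$. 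In particular the claim of the theorem holds.

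\textbf{Main obstacle.} The key technical input is the uniform tail bound above; the naive first-moment bound gives instead $\P(M_n-m_n>y)\le n^{3/2}\e^{-\theta_o y}$, where the factor $n^{3/2}$ destroys summability at the relevant times $T(z)\sim\e^{\theta_o z}$. The sharper exponential tail is known under (A1)--(A3) via a truncation of the additive martingale to the ``good'' region of the derivative martingale, and is indeed one of the working estimates underpinning the proof of Proposition~\ref{prop:Tr}. Once it is available, the rest of the argument above is a routine Borel--Cantelli application at the stopping times $T(z)$.
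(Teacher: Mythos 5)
Your proof is correct and follows the same skeleton as the paper's: decompose the population at the generation of $u$, run the first-overshoot time of $u$'s subtree, show all sibling subtrees lag behind at that time, and conclude by Borel--Cantelli. The one genuine difference is how the competitors are controlled. The paper invokes Proposition~\ref{Prop:Maximum-advantage-of-blue-before-Trz} to bound the \emph{running supremum} $\sup\{M_n^{(v)}\colon n\le T^{(u)}(z)\}$ of each sibling subtree, and handles the random number and spread of generation-$q$ individuals through the auxiliary events $\calE_{1,z},\calE_{2,z}$ (using $\E[\mathsf N_q]=\e^{q\kappa(0)}<\infty$ and an exponential Markov bound so that the Borel--Cantelli sum is unconditional). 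You instead evaluate each competitor only at the single random time $T(z)$, exploit that $T(z)$ is measurable with respect to $u$'s subtree and hence independent of the competitors given $\calF_{n_u}$, and apply the uniform-in-$n$ fixed-time tail bound after conditioning on $\{T(z)=n\}$; you then absorb the random $K$ and $C$ by a conditional Borel--Cantelli argument. This is a legitimate and arguably more economical route, since it bypasses the running-supremum control of Proposition~\ref{Prop:Maximum-advantage-of-blue-before-Trz} (whose proof is the technically heavier part); note, though, that Proposition~\ref{Prop:Maximum-advantage-of-blue-before-Trz} is proved in the paper anyway because it is also needed for Theorem~\ref{thm:coexistence}.

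Two small corrections. First, the uniform tail bound you assert is slightly too strong: what is actually available (from \cite[Proposition~2.1]{Mada17}, and what the paper records as~\eqref{Equ:Tail-of-Mnr}) is $\P(M_n-m_n>y)\le C_1(1+\theta_o y_+)\e^{-\theta_o y}$, with a polynomial prefactor that cannot be removed in general. This is harmless for you, since $(1+\theta_o(z-C))\e^{-\theta_o(z-C)}$ is still summable over integer $z$, but the bound should be stated with the prefactor. Second, for the statement of the theorem you only need the conclusion for \emph{one} value of $z$, so the Borel--Cantelli step could even be replaced by noting that the conditional probability of failure tends to $0$; your stronger ``for all sufficiently large $z$'' conclusion is of course also fine.
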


\subsection{Non-coexistence}
Our second main result features a class of models for competing BRWs that are independent and share the same first-order behavior, i.e., 
\begin{align}\label{Eq:FirstOrder}
\lim_{n\uparrow\infty}M_n^{(r)}/n=\lim_{n\uparrow\infty}M_n^{(b)}/n,\qquad\text{almost~surely},
\end{align}
but do not share the same centering. We show that differences in the second-order behavior are enough to exclude coexistence. In what follows, we focus our attention on pairs of independent BRWs $X^{(r)}$ and $X^{(b)}$ on $\Z$ with starting positions $X^{(r)}_0=0$ and $X^{(b)}_0=1$ that are based on
\begin{align*}
    \Xi_{r}=\sum_{j=1}^{N_{r}}\boldsymbol{\delta}_{\xi^{(r)}_j}\qquad \text{and}\qquad \Xi_{b}=\sum_{j=1}^{N_{b}}\boldsymbol{\delta}_{\xi^{(b)}_j},
\end{align*}
where $N_{r},N_{b}\in \{1,2,\dots\}$ are independent random variables with $1<\E[N_{r}]<\infty$ and $1<\E[N_{b}]<\infty$, so that both the BRWs survive almost~surely. We further assume that $(\xi^{(r)}_j)_{j\ge 1}$ and  $(\xi^{(b)}_j)_{j\ge 1}$ are families of i.i.d.\ symmetrically distributed random variables with values in $\{-M,\dots, M\}$ that are mutually independent and independent of $N_{r},N_{b}$ and with $\P(\xi^{(r)}_1=1)\wedge\P(\xi^{(b)}_1=1)>0$. Then, within the class of BRWs just defined, we can observe the following. 
\begin{prop}\label{Prop:counter}
For any non-degenerate distributions of $\xi^{(r)}_1$ and $\xi^{(b)}_1$, there exist distributions of $N_r$ and $N_b$ for which~\eqref{Eq:FirstOrder} holds and both $N_r$ and $N_b$ are almost~surely bounded.
\end{prop}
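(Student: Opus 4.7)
The plan is to express the asymptotic speed of each BRW as a continuous function of the mean offspring number $\mu = \E[N]$ alone (with the step law held fixed), to locate the overlap of the two ranges, and to realize a common speed by bounded integer-valued $N_r, N_b$. Concretely, conditioning on $N_r$ and using independence of $(\xi_j^{(r)})_{j\ge 1}$ from $N_r$ gives
\[
\kappa_{\Xi_r}(\theta) = \log \E[N_r] + \Lambda_r(\theta), \qquad \Lambda_r(\theta) := \log \E[\e^{\theta \xi_1^{(r)}}],
\]
and analogously for $\Xi_b$. Since $\xi_1^{(r)}$ takes values in a finite symmetric set and is non-degenerate, $\Lambda_r$ is smooth and strictly convex with $\Lambda_r'(0) = 0$. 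Because $N_r$ will be chosen bounded and $\xi_1^{(r)}$ is bounded, Conditions~\textbf{(A1)--(A3)} hold automatically, and the Biggins shape theorem identifies the asymptotic speed as $v_r = \Lambda_r'(\theta_o^{(r)})$, where $\theta_o^{(r)} > 0$ is the unique solution of $\theta \Lambda_r'(\theta) - \Lambda_r(\theta) = \log \E[N_r]$. Thus~\eqref{Eq:FirstOrder} reduces to $\Lambda_r'(\theta_o^{(r)}) = \Lambda_b'(\theta_o^{(b)})$.

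Next, for each fixed step law I would analyse the curve $\mu \mapsto v(\mu)$. Setting $g(\theta) := \theta \Lambda'(\theta) - \Lambda(\theta)$, one computes $g'(\theta) = \theta \Lambda''(\theta) > 0$ for $\theta > 0$, so $g$ is continuous and strictly increasing with $g(0) = 0$. Writing $v_{\max} := \max \supp(\xi_1)$ and $p_{\max} := \P(\xi_1 = v_{\max})$, a short asymptotic computation extracting the dominant term $p_{\max}\e^{\theta v_{\max}}$ from $\E[\e^{\theta \xi_1}]$ yields $g(\theta) \uparrow \log(1/p_{\max})$ and $\Lambda'(\theta) \uparrow v_{\max}$ as $\theta \uparrow \infty$. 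Consequently $v(\mu) = \Lambda'(g^{-1}(\log \mu))$ is a continuous bijection from $(1, 1/p_{\max})$ onto $(0, v_{\max})$.

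Applying this to both colors yields overlapping speed ranges, since the hypothesis $\P(\xi_1^{(r)}=1) \wedge \P(\xi_1^{(b)}=1) > 0$ forces $v_{\max}^{(r)}, v_{\max}^{(b)} \geq 1$. I would then pick any target speed $v^\ast \in (0, \min(v_{\max}^{(r)}, v_{\max}^{(b)}))$ and set $\mu_r := v_r^{-1}(v^\ast)$, $\mu_b := v_b^{-1}(v^\ast)$, both strictly above $1$. To realize these means by bounded integer-valued laws, pick any integer $K > \max(\mu_r, \mu_b)$ and let $N_r$ be supported on $\{1, K\}$ with $\P(N_r = K) = (\mu_r - 1)/(K-1)$, and analogously for $N_b$. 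Then $N_r, N_b$ are almost~surely bounded, never zero, and have the prescribed supercritical means. Applying the shape theorem separately to each color then delivers~\eqref{Eq:FirstOrder}.

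The only delicate point is the boundary behavior of $g$ at $\theta = \infty$: if $\mu$ were taken at or above $1/p_{\max}$, the tangent from the origin to the graph of $\kappa_\Xi$ would degenerate and the speed would saturate at $v_{\max}$, so the strict bound $v^\ast < \min(v_{\max}^{(r)}, v_{\max}^{(b)})$ is essential. Everything else reduces to standard convex-analysis bookkeeping together with the classical LLN for the right-most BRW particle.
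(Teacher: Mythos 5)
Your proposal is correct and follows essentially the same route as the paper: decompose $\kappa_\Xi(\theta)=\log\E[N]+\Lambda(\theta)$, use strict convexity and the intermediate value theorem to find $\theta$ with $\Lambda'(\theta)$ equal to a common target speed below both maximal step values, and then tune $\log\E[N]=\theta\Lambda'(\theta)-\Lambda(\theta)$ (positive by monotonicity of that map) so that Biggins' theorem gives the same speed for both colors. The only cosmetic differences are that you parametrize by $\mu=\E[N]$ and invert, and that you make the bounded two-point realization of $N$ explicit, which the paper leaves implicit.
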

As a warm up, we can observe non-coexistence in the case when the blue particles only make unit steps, which is advantageous, since the blue particles create paths without holes. 
\begin{prop}\label{Prop:counter2}
Let $\P(\xi^{(b)}_1=1)=1/2$. Then, there exist distributions of $\xi^{(r)}_1$, $N_r$ and $N_b$ such that~\eqref{Eq:FirstOrder} holds, but
\begin{align*}
\P(B \text{ is infinite})=\P(R\text{ is finite})=1.
\end{align*}
\end{prop}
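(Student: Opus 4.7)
The plan is to construct the blue and red BRWs so that they share the first-order speed $v^*$ but have strictly different second-order log-corrections in the front position, and then to exploit the \emph{no-holes} property of blue: every blue trajectory is a nearest-neighbor path on $\Z$, and hence the set of blue-visited integers up to time $n$ contains the whole interval spanned by the blue cloud at time $n$.

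Concretely, I would take $\xi^{(b)}_1$ uniform on $\{-1,+1\}$ and $N_b$ bounded with $\mu_b:=\E[N_b]\in(1,2)$, so that the tangent in {\bf (A2)} exists at a finite $\theta_b>0$ with speed $v^*=\tanh\theta_b$. For red I would take $\xi^{(r)}_1$ symmetric on $\{-M,\dots,M\}$ with $\P(\xi^{(r)}_1=1)>0$ and with strictly greater variance than $\xi^{(b)}_1$ (for instance, uniform on $\{-2,-1,1,2\}$), and then tune $\mu_r=\E[N_r]\in(1,\infty)$ so that $\kappa'_{\Xi_r}(\theta_r)=v^*$. Because a broader symmetric step distribution gives a strictly larger $\kappa'_{\Xi_r}(\theta)$ at every fixed $\theta>0$, the resulting tangent point satisfies $\theta_r<\theta_b$; moreover, the required $\mu_r$ equals $\exp(I_{\xi^{(r)}_1}(v^*))>1$ where $I_{\xi^{(r)}_1}$ is the Legendre transform of $\log \E[\e^{\theta\xi^{(r)}_1}]$, so the construction is admissible. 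By construction,~\eqref{Eq:FirstOrder} holds and both $N_r,N_b$ are almost~surely bounded.

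The crucial structural fact is now: because $\xi^{(b)}_1\in\{-1,+1\}$, the ancestral trajectory of any blue particle is a nearest-neighbor walk from $1$, so if some blue particle occupies $w\ge v>1$ at time $k$, then every integer in $[1,w]$, and in particular $v$, was visited by blue at some time $\le k$. Consequently, the first blue-visit time of $v$ equals the front-arrival time, $T_B(v)=\inf\{n\colon M_n^{(b)}\ge v\}$. For red there is no such reduction, but trivially $T_R(v)\ge\inf\{n\colon M_n^{(r)}\ge v\}$, since no red particle can sit at $v$ before red's rightmost has reached $v$. Combined with the almost~sure second-order asymptotics $M_n^{(c)}=nv^*-\tfrac{3}{2\theta_c}\log n+o(\log n)$ for $c\in\{b,r\}$---which follows from the exponential tail bounds on $M_n-m_n$ under {\bf (A1)--(A3)} combined with a Borel--Cantelli argument along geometric subsequences---inversion gives, almost~surely for all sufficiently large $v$,
\[
T_B(v)\le \frac{v}{v^*}+\frac{3}{2\theta_b v^*}\log v+o(\log v)\quad\text{and}\quad T_R(v)\ge \frac{v}{v^*}+\frac{3}{2\theta_r v^*}\log v+o(\log v).
\]

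Since $\theta_b>\theta_r$, the difference $T_R(v)-T_B(v)$ has leading term $\tfrac{3}{2v^*}\bigl(\tfrac{1}{\theta_r}-\tfrac{1}{\theta_b}\bigr)\log v\to\infty$, so $T_B(v)<T_R(v)$ almost~surely for all large $v$, meaning every such $v$ is colored blue. The mirror argument applied to the leftmost particles---where red's starting advantage of one unit to the left contributes only an $O(1)$ term, swallowed by the $\Theta(\log v)$ gap---handles large negative $v$. Thus $R\subseteq[-V_0,V_0]$ for some almost~surely finite random $V_0$, giving $\P(R\text{ is finite})=1$, and the no-holes property forces blue to color all but finitely many integers, so $\P(B\text{ is infinite})=1$. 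The main technical obstacle I expect is the almost~sure two-sided $o(\log n)$ control of $M_n^{(c)}-m_n$: this is a mild strengthening of the distributional tightness results of Aidékon--Shi type and relies on exponential tail bounds for the front---available under {\bf (A1)--(A3)}, as exploited also in the buildup to Proposition~\ref{prop:Tr}---fed into a Borel--Cantelli argument along a sufficiently dense time subsequence with monotone interpolation.
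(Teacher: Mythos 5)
There is a genuine gap, and it sits exactly at the step you flag as your ``main technical obstacle'': the claimed almost-sure asymptotics $M_n^{(c)}=nv^*-\tfrac{3}{2\theta_c}\log n+o(\log n)$ is \emph{false} for branching random walks. By Hu--Shi \cite[Theorem~1.2]{HuSh09} (which is what the paper uses here), one has almost surely
\[
\limsup_{n\uparrow\infty}\frac{M_n^{(c)}-nv^*}{\log n}=-\frac{1}{2\theta_c},
\qquad
\liminf_{n\uparrow\infty}\frac{M_n^{(c)}-nv^*}{\log n}=-\frac{3}{2\theta_c},
\]
so the second-order term oscillates on the scale $\log n$ and no Borel--Cantelli strengthening of tightness can remove this: the constant $3/2$ describes the typical (in-probability) centering, not the almost-sure upper envelope. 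Consequently your lower bound $T_R(v)\ge \tfrac{v}{v^*}+\tfrac{3}{2\theta_r v^*}\log v+o(\log v)$ is not available; the best almost-sure eventual lower bound is $T_R(v)\ge \tfrac{v}{v^*}+\tfrac{1-\epsilon}{2\theta_r v^*}\log v$, because red's front rises to $nv^*-\tfrac{1}{2\theta_r}\log n\,(1+o(1))$ infinitely often. Comparing this with the (correct) upper bound $T_B(v)\le \tfrac{v}{v^*}+\tfrac{3+\epsilon}{2\theta_b v^*}\log v$, the argument only closes if $\tfrac{3}{2\theta_b}<\tfrac{1}{2\theta_r}$, i.e.\ $\theta_b>3\theta_r$ --- not merely $\theta_b>\theta_r$. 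If $\theta_r<\theta_b<3\theta_r$, red's front overtakes blue's front infinitely often and red keeps coloring new sites, so your construction does not exclude coexistence.

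This is why the paper's proof does not stop at ``red has strictly larger variance'': it engineers $\theta_b>3\theta_r$ via the function $g(\theta)=\phi_r'(\theta)-\phi_b'(3\theta)$, whose positivity near $0$ requires $\E[(\xi_1^{(r)})^2]>3\,\E[(\xi_1^{(b)})^2]$. Your concrete choice ($\xi^{(b)}_1$ uniform on $\{-1,+1\}$, $\xi^{(r)}_1$ uniform on $\{-2,-1,1,2\}$) has variances $1$ and $5/2$, so it fails even this necessary condition. The remaining ingredients of your write-up --- matching the speeds by tuning $\E[N_r],\E[N_b]$, the no-holes property of the nearest-neighbor blue walk, the hitting-time inversion, and the mirror argument on the left with the $O(1)$ starting offset --- are sound and coincide with the paper's strategy, but the proof as written does not go through without replacing the false a.s.\ $\tfrac{3}{2\theta}\log n$ expansion by the limsup/liminf dichotomy and strengthening the variance separation accordingly.
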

From the proof we can see that we need  $\E[(\xi^{(r)}_1)^2]> 3$. 
Our second main result now features non-trivial conditions under which there is no coexistence even if both paths have holes. 
\begin{theorem}\label{Thm:counter}
There exist distributions of $\xi^{(r)}_1$, $\xi^{(b)}_1$, $N_r$ and $N_b$ with
$\E[(\xi^{(r)}_1)^2] > 3\E[(\xi^{(b)}_1)^2]$ and 
$\P(\xi^{(r)}_1 \notin \{-1,0,1\})\wedge\P( \xi^{(b)}_1 \notin \{-1,0,1\}) > 0$
such that~\eqref{Eq:FirstOrder} holds, but
\begin{align*}
\P(B \text{ is infinite})=\P(R\text{ is finite})=1.
\end{align*}
\end{theorem}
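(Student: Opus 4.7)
My plan is to combine three ingredients: a construction that matches the first-order speeds of $\Xi_r$ and $\Xi_b$ while enforcing the quantitative variance gap; a deterministic second-order (logarithmic) separation of the two centerings; and a density/hole-filling estimate ensuring that red cannot colonize any site in the growing annulus between the two fronts before blue does.

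I first construct the required distributions. Starting from symmetric bounded jump distributions $\xi^{(r)}_1,\xi^{(b)}_1$ on $\{-M,\dots,M\}$ with variances satisfying $\E[(\xi^{(r)}_1)^2]>3\E[(\xi^{(b)}_1)^2]$ and $\P(\xi^{(r)}_1\notin\{-1,0,1\})\wedge\P(\xi^{(b)}_1\notin\{-1,0,1\})>0$, I would tune bounded offspring distributions $N_r,N_b$ so that the first-order speeds $\kappa'_{\Xi_r}(\theta_{\Xi_r})$ and $\kappa'_{\Xi_b}(\theta_{\Xi_b})$ coincide, by the same intermediate-value-in-$\E[N_c]$ argument used in Proposition~\ref{Prop:counter}. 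The continuity of the involved quantities in these parameters leaves room to satisfy all the structural constraints simultaneously.

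The second ingredient is a deterministic logarithmic gap between the two centerings. Since the common speed is fixed while the jump variance is strictly larger for red, the associated Legendre parameter $\theta_{\Xi_r}$ is strictly smaller than $\theta_{\Xi_b}$, and writing
\[
m_n^{(c)}:=n\kappa'_{\Xi_c}(\theta_{\Xi_c})-\tfrac{3}{2\theta_{\Xi_c}}\log n,\qquad c\in\{r,b\},
\]
we obtain $m_n^{(b)}-m_n^{(r)}=\tfrac{3}{2}\bigl(1/\theta_{\Xi_r}-1/\theta_{\Xi_b}\bigr)\log n=:\gamma\log n$, with $\gamma>0$ quantitatively controlled by the variance ratio. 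Proposition~\ref{prop:Tr} applied to each color, combined with standard lower-tail estimates for the BRW maximum, yields almost~surely
\[
M_n^{(b)}-M_n^{(r)}\ge\tfrac{\gamma}{2}\log n
\]
for all sufficiently large $n$. The hypothesis $\E[(\xi^{(r)}_1)^2]>3\E[(\xi^{(b)}_1)^2]$ is what makes $\gamma$ large enough to dominate the fluctuation terms that enter the next step.

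The main technical step, which I expect to be the hardest, is a hole-filling estimate showing that red cannot reach any site in the annulus $A_n:=(M_n^{(r)},M_n^{(b)}]$ before blue does. Because both colors have jumps bounded by $M$, the red front needs at least order $\log n$ time steps to push $M^{(r)}$ past a site $y$ in the upper part of $A_n$. During this window I would lower bound the expected number of blue particles landing on $y$ via a many-to-one lemma under the exponential tilt at parameter $\theta_{\Xi_b}$ combined with a local central limit theorem for the tilted walk; the resulting expectation is polynomial in $n$, so a first-moment estimate implies that the probability of $y$ being a blue hole at the moment red could reach it decays super-polynomially in $n$. A union bound over the sites of $A_n$ and Borel--Cantelli over $n$ then close the argument. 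The core obstacle is the strong dependence between blue particle trajectories that share recent ancestors; I would handle it by restricting to descendants of a small set of well-separated ancestors at an intermediate time, using the branching property to gain independence between their subtrees, and then applying a second-moment or Chernoff argument to upgrade the expectation to a high-probability lower bound on actual visits.

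Combining the three pieces, almost~surely for all large $n$ every site up to $M_n^{(b)}$ is either already blue or is reached by blue strictly before any red particle can jump to it. Consequently only finitely many further sites are ever colored red, yielding $\P(R\text{ is finite})=1$, while $\P(B\text{ is infinite})=1$ is immediate from the divergence of the blue front.
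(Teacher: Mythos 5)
Your overall architecture matches the paper's: match the speeds, establish a logarithmic gap between the two fronts, and show blue fills every site in the gap before red can arrive. However, there is a genuine gap in your second step. You deduce the almost-sure separation $M_n^{(b)}-M_n^{(r)}\ge\tfrac{\gamma}{2}\log n$ from the difference of the \emph{centerings}, $\gamma=\tfrac32(1/\theta_r-1/\theta_b)$, which is positive as soon as $\theta_r<\theta_b$. This is not the right quantity. By the Hu--Shi fluctuation results (used in the paper's Proposition~\ref{Prop:counter2}), $M_n^{(r)}-xn$ oscillates almost surely between $-\tfrac{3}{2\theta_r}\log n$ and $-\tfrac{1}{2\theta_r}\log n$, and likewise for blue; an eventual a.s.\ ordering of the maxima therefore requires the \emph{liminf} coefficient of blue to exceed the \emph{limsup} coefficient of red, i.e.\ $-\tfrac{3}{2\theta_b}>-\tfrac{1}{2\theta_r}$, equivalently $\theta_b>3\theta_r$, and the correct separation constant is $\tfrac{1}{2\theta_r}-\tfrac{3}{2\theta_b}$. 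If one only arranges $\theta_r<\theta_b<3\theta_r$, your $\gamma$ is positive but the fronts are \emph{not} eventually ordered and the whole argument collapses. The factor $3$ in the variance hypothesis is precisely what produces $\theta_b>3\theta_r$ locally (via $g(\theta)=\phi_r'(\theta)-\phi_b'(3\theta)$, $g'(0)=\E[(\xi^{(r)}_1)^2]-3\E[(\xi^{(b)}_1)^2]>0$), but since this is an existence statement you must actually exhibit distributions achieving it; the paper does so with $\xi^{(b)}_1$ uniform on $\{-2,-1,1,2\}$, $\xi^{(r)}_1$ uniform on $\{-M,-1,1,M\}$ for $M$ large, and also checks that $\theta_b$ exists at all (this forces $\E[N_b]<4$), a point you skip.

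Your hole-filling step is also only a sketch at the point where the real work lies. As you note, a first-moment (many-to-one plus LCLT) lower bound on the expected number of blue particles at a site $y$ does not control the probability that $y$ is never visited, and a naive second-moment bound at a site order $\log n$ behind the blue front only yields a constant, not super-polynomially small, failure probability via Paley--Zygmund. Your proposed fix (independent subtrees from well-separated ancestors) is the right idea, but it is exactly the content that needs to be executed. The paper's execution is more elementary and worth comparing: since $|\xi^{(b)}_1|\le 2$, consecutive positions of the blue maximum skip at most one site, so the only possible holes sit in pairs $\{k,k+1\}$ with a blue particle already on one element of the pair at time $\tau^{(b)}(k)$; red needs at least $c\log\tau^{(b)}(k)$ further steps to reach the pair, and in that window the $2^{\lfloor\ell^{1/3}\rfloor}$ independent blue subtrees present at time $\lfloor\ell^{1/3}\rfloor$ (with $\ell=\lfloor c\log k\rfloor$) each hit the missing neighbor with probability bounded below via a random-walk estimate, giving a doubly-exponentially small failure probability that is summable in $k$. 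In short: the structure of your proof is right, but the two quantitative hinges --- the threshold $\theta_b>3\theta_r$ and the independence argument for hole-filling --- are missing or misidentified.
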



\section{Proofs}\label{sec:proof}
\subsection{Proofs for coexistence}
Let us start with some preliminary large-deviation statements that we, together with all subsequent lemmas, prove at the end of the section. 
\begin{lem}\label{Lem:upper-bound-of-upper-deviation}
For all $\delta>0$, there exists $c,C>0$ such that for all sufficiently large $z$,
\begin{align}
   \P\big(\exists n \leq \e^{\theta_o z}\colon M_n^{(r)}-m_n > (1+\delta)z \text{ or } M_n^{(b)}-m_n > (1+\delta)z \big) 
   &<  C(1+(1+\delta)\theta_o z)\e^{-\theta_o\delta z}\quad\text{and}\label{dis_3}\\
   \P\big(\forall n \leq \e^{\theta_o z}\colon M_n^{(r)}-m_n < (1-\delta)z \text{ or } M_n^{(b)}-m_n < (1-\delta)z\big) 
   &<  C\e^{-c\delta z}.\label{dis_3b}
\end{align}
\end{lem}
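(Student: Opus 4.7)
The plan is to establish~\eqref{dis_3} by a union bound on a sharp single-time upper tail, and~\eqref{dis_3b} by controlling the running supremum $\sup_{n\le N}(M_n-m_n)$ via a regeneration argument. Both inequalities reduce, by a factor-$2$ union bound over the two colors, to statements about a single one-dimensional BRW driven by one of $\Phi_{\boldsymbol{e}},\Psi_{\boldsymbol{e}}$.

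The key single-time input is the uniform tail estimate
\[
\P\big(M_n-m_n>y\big)\le C\,(1+\theta_o y)\,\e^{-\theta_o y},\qquad n\ge 1,\ y\ge 0,
\]
valid under \textbf{(A1)}--\textbf{(A3)}. It follows from the many-to-one identity together with a Mogul'skii/ballot estimate for the random walk tilted by $\theta_o$ (of mean $\kappa'(\theta_o)$ by \textbf{(A2)} and variance $\kappa''(\theta_o)$); the integrability assumptions in \textbf{(A3)} are exactly what makes the ballot refinement applicable, the factor $(1+\theta_o y)$ arising from enforcing that the spinal trajectory stays below the barrier $m_k+y$ up to time $n$. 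Summing this bound over $n\le\lceil\e^{\theta_o z}\rceil$ and over the two colors, and using $\e^{\theta_o z}\cdot\e^{-\theta_o(1+\delta)z}=\e^{-\theta_o\delta z}$, yields~\eqref{dis_3}.

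For~\eqref{dis_3b}, I read the event as the union over $c\in\{r,b\}$ of $\{\forall n\le N\colon M_n^{(c)}-m_n<(1-\delta)z\}$ with $N=\e^{\theta_o z}$, so by a further union bound it suffices to prove $\P(\sup_{n\le N}(M_n-m_n)<(1-\delta)z)\le C\e^{-c\delta z}$. The plan is to partition $[0,N]$ into $K=\lceil\e^{\theta_o\delta z/3}\rceil$ equal blocks of length $L\asymp\e^{\theta_o(1-\delta/3)z}$, so that a fresh BRW of length $L$ has $\sup_{n\le L}(M_n-m_n)\ge(1-\delta)z$ with positive probability $c_0>0$ uniformly in $z$: indeed, the typical size $(1/\theta_o)\log L=(1-\delta/3)z$ of this supremum exceeds $(1-\delta)z$ by $\delta z/3$, while the matching lower single-time tail $\P(M_n-m_n\ge y)\ge c(1+\theta_o y)\e^{-\theta_o y}$ (also a consequence of \textbf{(A1)}--\textbf{(A3)} via Aid\'ekon's convergence theorem) guarantees that this threshold is hit with non-vanishing probability. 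A regeneration at each block boundary $n_k=kL$---selecting, measurably with respect to the past, a distinguished particle lying in an $O(1)$-window around the extremal level $m_{n_k}$ and whose descendant subtree is an independent fresh BRW---then makes the $K$ per-block successes independent, and one obtains
\[
\P(\text{no block succeeds})\le (1-c_0)^K\le\e^{-c_0 K}\le\e^{-c\delta z}.
\]
The existence of the distinguished particle with block failure probability at most $\e^{-c\delta z}$ is ensured by the $L^2$-convergence of the additive martingale $W_n(\theta_o)$, consequence of the moment condition $\E[W(\log W)_+^2]<\infty$ in \textbf{(A3)}.

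The hard part is the regeneration: one has to show that the sub-BRW from the distinguished particle contributes to the full-BRW supremum in the block with the required lower bound, so that the per-block success probability is bounded below by $c_0$ independently of the past. This requires combining the spatial concentration of extremal particles provided by the martingale convergence with a lower-tail estimate for the sup of a fresh BRW over a window of length $L$, both of which rely on \textbf{(A3)}; in particular, one has to argue that the supremum of the selected sub-BRW over $L$ generations is comparable, up to an additive $O(1)$, to that of the full BRW over the same window.
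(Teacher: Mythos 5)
Your treatment of~\eqref{dis_3} is essentially the paper's: a single-time tail bound $\P(M_n-m_n>y)\le C(1+\theta_o y)\e^{-\theta_o y}$, uniform in $n$ (the paper imports it from Madaule rather than re-deriving it via many-to-one and a ballot estimate), followed by a union bound over $n\le\e^{\theta_o z}$ and over the two colors. That part is fine.

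For~\eqref{dis_3b} there are genuine gaps. First, your per-block success probability is not a uniform constant $c_0>0$ by the arguments you invoke: the single-time lower tail at $y=(1-\delta)z$ is exponentially small in $z$, and the available quantitative lower bound for the running maximum over a time window of length $L$ (the second-moment estimate \cite[Eq.~(4.6)]{HuSh09}) only yields $L^{-\epsilon}$, i.e.\ $\e^{-\epsilon\theta_o(1-\delta/3)z}$, not a constant. The assertion that $\sup_{n\le L}(M_n-m_n)\ge(1-\delta)z$ holds with probability bounded away from zero for $L=\e^{\theta_o(1-\delta/3)z}$ is essentially the content of the lemma itself (equivalently of Proposition~\ref{prop:Tr}), so taking it as an input is circular. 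Second, the regeneration step rests on a false input: at the critical parameter $\theta_o$ (where $\theta_o\kappa'(\theta_o)=\kappa(\theta_o)$) the additive martingale $W_n(\theta_o)$ converges to $0$ almost surely and is not uniformly integrable, let alone $L^2$-convergent; condition {\bf (A3)} is the A\"{\i}d\'ekon-type condition governing the derivative martingale, not an $L^2$ bound on $W_n(\theta_o)$. Moreover, the event that the front at a block boundary lies within $O(1)$ of $m_{n_k}$ fails with probability bounded away from zero (the recentred maximum has a nondegenerate limit law), so your claimed per-block failure bound $\e^{-c\delta z}$ for the existence of the distinguished particle cannot hold, and with $K=\e^{\theta_o\delta z/3}$ blocks the union bound does not close. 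The paper circumvents both issues by amplifying over particles rather than over time: up to an exponentially small failure probability, generation $\lfloor\alpha z\rfloor$ (with $\alpha$ of order $\delta$) contains at least $\e^{\lfloor\alpha z\rfloor}$ particles located above $-\lfloor\alpha z\rfloor\Lambda$; their descendant subtrees are genuinely i.i.d.\ BRWs, each reaching level $(1-\delta/2)z$ above the centering within the window $[\e^{\theta_o z}/4,\e^{\theta_o z}/2)$ with probability at least $\e^{-\epsilon\theta_o z}$ by \cite[Eq.~(4.6)]{HuSh09}; choosing $\epsilon$ small relative to $\alpha$ makes $(1-\e^{-\epsilon\theta_o z})^{\e^{\lfloor\alpha z\rfloor}}$ doubly exponentially small, and the spatial and temporal offsets of order $\alpha z$ are absorbed into the threshold by taking $\alpha$ proportional to $\delta$. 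If you wish to keep your time-block structure, you would still need this particle-level amplification inside each block to boost $\e^{-\epsilon\theta_o z}$ to a constant, at which point the blocks become redundant.
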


With the help of Lemma~\ref{Lem:upper-bound-of-upper-deviation}, we can directly prove Proposition~\ref{prop:Tr}.
\begin{proof}[Proof of Proposition~\ref{prop:Tr}]
Note that, for all $\delta>0$,
\begin{align*}
    \P(|z^{-1}\log T^{(r)}(z)-\theta_o|>\delta)&=\P(T^{(r)}(z)<\e^{(\theta_o-\delta)z})+\P(T^{(r)}(z)>\e^{(\theta_o+\delta)z})\\
    &=\P\big(\exists n \leq \e^{(\theta_o -\delta)z}\colon M_n^{(r)}-m_n >z \big)+\P\big(\forall n \leq \e^{(\theta_o +\delta)z}\colon M_n^{(r)}-m_n >z \big),
\end{align*}
which is exponentially small by Lemma~\ref{Lem:upper-bound-of-upper-deviation} and hence summable in $z$. An application of the Borel--Cantelli lemma now gives the result. 
\end{proof}

Next, we quantify the relative positions of the right-most particles up to the stopping times.

\begin{prop}
\label{Prop:Maximum-advantage-of-blue-before-Trz}
For all $\delta\in(0,1)$, there exist $C_1,C_2>0$ such that for all sufficiently large $z$,
\begin{align}\label{Eq:dis_3}
    \P\big(\sup\big\{ M_n^{(b)} \colon n\leq T^{(r)}(z)\big\} > m_{T^{(r)}(z)}+\delta z\big)<C_1 \e^{-C_2\delta z}.
\end{align}
\end{prop}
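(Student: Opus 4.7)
The plan is to exploit the independence of the blue branching random walk $X^{(b)}$ from $T^{(r)}(z)$ (which is measurable with respect to the red BRW alone), reduce the event in~\eqref{Eq:dis_3} to a uniform-in-$t$ tail bound on the blue maximum, and close via a single-time Aidekon-type estimate combined with the boundary-case identity $\theta_o\kappa'(\theta_o)=\kappa(\theta_o)$ from~\textbf{(A2)}.

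Conditioning on $T^{(r)}(z)=t$, the probability in~\eqref{Eq:dis_3} equals $\E[p(T^{(r)}(z))]$, where
\[
p(t):=\P\bigl(\exists\,n\le t\colon M_n^{(b)}>m_t+\delta z\bigr),
\]
so it suffices to prove a bound $p(t)\le C(1+\delta z)\e^{-\theta_o\delta z}$ uniformly in $t$ and then integrate against the law of $T^{(r)}(z)$. Using $m_n\le m_t$ for $n\le t$ (valid once $n$ is large enough for $m$ to be increasing, the finitely many small $n$ only contributing a bounded factor), a union bound gives $p(t)\le\sum_{n=1}^t\P(M_n^{(b)}>m_n+A_n)$ with $A_n:=m_t-m_n+\delta z$.

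I would then invoke the single-time tail bound $\P(M_n^{(b)}>m_n+A)\le C(1+A)\e^{-\theta_o A}$ (uniform in $n$), which is the estimate underlying~\eqref{dis_3}: indeed, \eqref{dis_3} is precisely this bound with $A=(1+\delta)z$ summed over $n\le\e^{\theta_o z}$. The identity $\theta_o m_n=n\kappa(\theta_o)-\tfrac{3}{2}\log n$ from~\textbf{(A2)} rewrites the exponential factor as $\e^{-\theta_o(m_t-m_n)}=(t/n)^{3/2}\e^{-(t-n)\kappa(\theta_o)}$, so each summand acquires a geometric factor $\e^{-(t-n)\kappa(\theta_o)}$ away from $n=t$. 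Substituting $k=t-n$ and splitting at $k=t/2$: for $k\le t/2$ the factor $(t/(t-k))^{3/2}$ is bounded and $A_n\le C(1+k+\delta z)$, so the geometric series is bounded by $C(1+\delta z)$ independently of $t$; for $k>t/2$ the exponential $\e^{-k\kappa(\theta_o)}$ is superpolynomially small in $t$ and dominates the polynomial blowup of $(t/(t-k))^{3/2}$. Hence $p(t)\le C(1+\delta z)\e^{-\theta_o\delta z}$ uniformly in $t$, and absorbing the polynomial prefactor into a slightly smaller exponential rate $C_2<\theta_o$ closes the argument.

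The main obstacle I anticipate is the uniform-in-$n$ single-time bound $\P(M_n>m_n+A)\le C(1+A)\e^{-\theta_o A}$: this is the Aidekon/Madaule-style criticality estimate under the boundary condition~\textbf{(A2)}, and it is crucial that the rate $\theta_o$ appear with only a linear prefactor rather than a polynomial factor $n^{3/2}$ (which a plain Markov inequality yields and which would accumulate to $t^{3/2}=\e^{(3/2)\theta_o z}$ over the range $n\le t$, forcing the artificial restriction $\delta>3/2$). Once the single-time bound is in hand---either cited or extracted from the internal workings of the proof of Lemma~\ref{Lem:upper-bound-of-upper-deviation}---the remaining geometric-series bookkeeping is routine, and the uniformity of the bound in $t$ trivializes the integration against the law of $T^{(r)}(z)$.
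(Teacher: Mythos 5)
Your argument is correct, but it takes a genuinely different route from the paper's. Both proofs rest on the same two pillars, namely the independence of the blue process from $T^{(r)}(z)$ and the Madaule-type single-time tail bound $\P(M_n^{(b)}-m_n>x)\le C(1+\theta_o(x-\varrho)_+)\e^{-\theta_o(x-\varrho)}$, which is exactly the display~\eqref{Equ:Tail-of-Mnb} established inside the proof of Lemma~\ref{Lem:upper-bound-of-upper-deviation}; they differ in how the union bound over $n\le T^{(r)}(z)$ is organized. The paper first restricts to the event $\calA_z=\{\e^{(1-\delta)\theta_o z}\le T^{(r)}(z)\le\e^{(1+\delta)\theta_o z}\}$, which costs both halves of Lemma~\ref{Lem:upper-bound-of-upper-deviation} (in particular the lower-deviation estimate~\eqref{dis_3b}, whose proof is the more involved part), and then splits the time range at $T^{(r)}(z)-\floor{z}^2$: for early times it uses the crude uniform bound $M_n^{(b)}\le m_n+(1+2\delta)z$ and absorbs the excess into the gap $m_{T^{(r)}(z)}-m_{T^{(r)}(z)-\floor{z}^2}\approx\kappa'(\theta_o)\floor{z}^2$, while for the last $\floor{z}^2$ times it applies the single-time bound with excess $\delta z$. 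Your weighted union bound replaces this split by one geometric series: rewriting $\e^{-\theta_o(m_t-m_n)}=(t/n)^{3/2}\e^{-(t-n)\kappa(\theta_o)}$ and using $\kappa(\theta_o)=\theta_o\kappa'(\theta_o)>0$, the sum over $n\le t$ converges uniformly in $t$, so you obtain $p(t)\le C(1+\delta z)\e^{-\theta_o\delta z}$ with no a priori control on $T^{(r)}(z)$ whatsoever. This is cleaner and more economical, since it requires neither Proposition~\ref{prop:Tr} nor~\eqref{dis_3b} as inputs, and it yields the rate $C_2$ arbitrarily close to $\theta_o$ rather than $\theta_o/2$. Two small points should be made explicit: the identity $\P(\cdot)=\E[p(T^{(r)}(z))]$ uses that $T^{(r)}(z)<\infty$ almost surely; and what the argument really needs is not monotonicity of $n\mapsto m_n$ but a uniform lower bound $\inf_{1\le n\le t}(m_t-m_n)\ge -C_0$ over all $t$ (obtained by splitting at $n=t/2$), so that $A_n\ge \delta z-C_0$ and the single-time bound applies at the cost of a constant. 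Neither is a gap.
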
 
Before we present the proof of this statement, let us provide the proof of our first main result. 
\begin{proof}[Proof of Theorem~\ref{thm:coexistence}]
Observe that by the definition of $T^{(r)}(z)$, 
\[
M_{T^{(r)}(z)}^{(r)} >m_{T^{(r)}(z)} +z.
\]
By Proposition~\ref{Prop:Maximum-advantage-of-blue-before-Trz} and using the Borel--Cantelli lemma, we obtain that for $\delta \in (0,1)$, almost~surely, for all sufficiently large $z$,
\begin{align}\label{Equ:Blue-did-not-reached-MrTrz}
\sup\big\{ M_n^{(b)} \colon n\leq T^{(r)}(z)\big\} \le m_{T^{(r)}(z)}+\delta z < M_{T^{(r)}(z)}^{(r)}.   
\end{align}
Now, let $H^{(r)}(z):=\big\{ \boldsymbol{a}\in\Z^d \colon \langle \boldsymbol{a},\boldsymbol{e} \rangle 
= M_{T^{(r)}(z)}^{(r)} \big\}$.
We recall that for all $n\ge 0$, $X_n^{(r)}$ and $X_n^{(b)}$ are the projections of $\mathcal{X}_n^{(r)}$ and $\mathcal{X}_n^{(b)}$, respectively, along the vector $\boldsymbol{e}$. Therefore, we get that almost~surely, for all sufficiently large $z$,
\begin{align}
    \calX_{T^{(r)}(z)}^{(r)}\big(H^{(r)}(z) \big)\geq 1,
    \qquad\text{ but }\qquad
    \calX_i^{(b)}\big( H^{(r)}(z) \big)=0 \text{ for all $0\leq i\leq M_{T^{(r)}(z)}^{(r)}$},
\end{align}
which implies that almost~surely, for all sufficiently large $z$,
\begin{align}
    R\cap  H^{(r)}(z) \neq \emptyset.
    \label{Equ:RcapH}
\end{align}
Fix $\delta>0$. By Proposition~\ref{prop:Tr}, almost~surely, for all sufficiently large $z$, 
\begin{align}
  \e^{\theta_o (1-\delta)z}\leq T^{(r)}(z) \leq \e^{\theta_o (1+\delta)z} .
  \label{Equ:Bound-on-Trz}
\end{align}
In combination with Lemma~\ref{Lem:upper-bound-of-upper-deviation} and the
Borel--Cantelli lemma, this implies that almost~surely, for all sufficiently large $z$, 
\[
M_{T^{(r)}(z)}^{(r)} \leq m_{T^{(r)}(z)} +(1+2\delta)z.
\]
On the other hand, it follows from the definition that 
\[
M_{T^{(r)}(z)}^{(r)} \geq m_{T^{(r)}(z)} +z.
\]
Combining these with~\eqref{Equ:Bound-on-Trz}, we obtain that almost~surely, for all sufficiently large $z$, 
\begin{align}
  \kappa'(\theta_o)\e^{\theta_o (1-2\delta)z}  \leq M_{T^{(r)}(z)}^{(r)}\leq \kappa'(\theta_o) \e^{\theta_o (1+2\delta)z}.
  \label{Equ:Bound-on-MrTrz}
\end{align}
Let $q > (1+2\delta)/(1-2\delta)$ be an integer. Since the intervals
$[ \kappa'(\theta_o)\,\e^{\theta_o (1-2\delta) q^i},
            \kappa'(\theta_o)\,\e^{\theta_o (1+2\delta) q^i} ]$
are pairwise disjoint for all $i \ge 0$, it follows that, almost~surely, the
values $M_{T^{(r)}(q^i)}^{(r)}$ are distinct for all sufficiently large $i$.
Consequently, the sets $H^{(r)}(q^i)$ are almost~surely pairwise disjoint for
all sufficiently large $i$, which, together with~\eqref{Equ:RcapH}, implies that $R$ is infinite almost~surely.
Using the exact same argument we also see that $B$ is infinite almost~surely. Thus we proved the coexistence.

Now, for the second statement note that \eqref{Equ:RcapH} implies that, almost~surely, 
for all sufficiently large $z$,  $ R\cap  H^{(r)}(z) \neq \emptyset$. By an analogous argument, defining
$H^{(b)}(z):=\big\{ \boldsymbol{a}\in\Z^d \colon \langle \boldsymbol{a},\boldsymbol{e} \rangle 
= M_{T^{(b)}(z)}^{(b)} \big\}$, we get that $ B\cap  H^{(b)}(z) \neq \emptyset$
 almost~surely for all sufficiently large $z$. Fix any $0<c_1<c_2$. To prove the desired result, it is then enough to show that, almost~surely, 
for all sufficiently large $z$,  
 \begin{align}
 \label{Equ:location_estimate}
    H^{(r)}(z)\cup H^{(b)}(z)\subseteq\{\boldsymbol{a} \in \mathbb{Z}^d \colon \exp(c_1 z)\le \langle \boldsymbol{a}, \boldsymbol{e} \rangle \le  \exp(c_2 z)\}. 
 \end{align}
We choose $c,\delta>0$ such that
\[
    c_1 < \theta_o(1-2\delta)c 
       < \theta_o(1+2\delta)c 
       < c_2.
\]
From~\eqref{Equ:Bound-on-MrTrz} we see that, almost~surely, for all
sufficiently large $z$,
\[
\e^{c_1 z} < \kappa'(\theta_o)\e^{\theta_o (1-2\delta)\floor{cz}}  \leq M_{T^{(r)}(\floor{cz})}^{(r)}\leq \kappa'(\theta_o) \e^{\theta_o (1+2\delta)\floor{cz}} <\e^{c_2 z},
\]
and, similarly,
\[
\e^{c_1 z} <  M_{T^{(b)}(\floor{cz})}^{(b)} <\e^{c_2 z}.
\]
This implies~\eqref{Equ:location_estimate}  and completes the proof.
\end{proof}
In what follows, we write for an individual $u$ in the BRW, $|u|$ for its generation and $S_u$ for its position.
\begin{proof}[Proof of Theorem~\ref{Thm:Democracy}] 

Let $u$ be an individual in the BRW driven by the offspring point process $\Xi$,
and let $|u| = q$ be its generation number. Let $\mathsf N_{n}$ be the total number of individuals at generation $n$. By Assumption~{\bf (A2)}, 
$\E[\mathsf N_q] = \e^{q\kappa(0)} < \infty$.
For any $v$ in generation $q$, the descendants of $v$ form a BRW, denoted
$X^{(v)}$, defined by
\[
    X^{(v)}_k := \sum_{|w| = q+k,\, v \prec w} 
    \boldsymbol{\delta}_{(S_w - S_v)},
\]
where $v\prec w$ stands for $v$ being an ancestor of $w$.
The family $\{X^{(v)}\colon |v| = q\}$ consists of i.i.d.\ BRWs started at~$o$ and
driven by the same offspring point process $\Xi$.
For $z > 0$, set
\[
    T^{(u)}(z) := \inf\{ n \ge 0\colon  M^{(u)}_n - m_n > z \}.
\]
By Proposition~\ref{Prop:Maximum-advantage-of-blue-before-Trz}, for any
$\delta \in (0, 1/4)$ there exist constants $C_1, C_2 > 0$ such that, for each
$v \neq u$ with $|v| = q$,
\[
 \P\big(\sup\big\{ M_n^{(v)} \colon n\leq T^{(u)}(z)\big\} > m_{T^{(u)}(z)}+\delta z\big)<C_1 \e^{-C_2\delta z}.
\]
Define the event
\[
\calE_{1,z}:=\Big\{ \sup_{|v|=q,\,v\neq u}\big\{ M_n^{(v)} \colon n\leq T^{(u)}(z)\big\} > m_{T^{(u)}(z)}+\delta z \Big\}.
\]
We have that
\begin{align*}
   \P(\calE_{1,z}) 
   =\E\big[\P(\calE_{1,z}| \mathsf N_q )\big]< C_1\E[\mathsf N_q] \e^{-C_2\delta z} = C_1\e^{q\kappa(0)} \e^{-C_2\delta z}.
\end{align*}
Let $\mathcal{E}_{2,z}$ be the event that there exists an individual in
generation $q$ whose position lies outside the interval $[-\delta z, \delta z]$.
For any $\alpha \in (0, \eta)$, with $\eta$ as in Assumption~{\bf (A2)}, Markov’s
inequality yields
\begin{align*}
    \P(\calE_{2,z})
    \leq \P\Big(\sum_{|v|=q}\e^{-\alpha S_v} + \sum_{|v|=q}\e^{\alpha S_v}\geq \e^{\alpha\delta z}\Big) 
    \leq  \e^{-\alpha\delta z} \big(\e^{\kappa(-\alpha)}+ \e^{\kappa(\alpha)} \big).
\end{align*}
Then, by the Borel--Cantelli lemma, almost~surely the event
$(\mathcal{E}_{1,z} \cup \mathcal{E}_{2,z})^{\complement}$ occurs for all
sufficiently large~$z$. On this event we have
\begin{align*}
    \sup\big\{S_w\colon |w|=q+ T^{(u)}(z), u\nprec w\big\}
    &=\sup\{S_v + M_{T^{(u)}(z)}^{(v)}\colon |v|=q, v\neq u\}\\
    & \leq S_u +2\delta z + \sup\big\{ M_{T^{(u)}(z)}^{(v)}\colon |v|=q, v\neq u\big\}\\
    & \leq S_u +2\delta z + m_{T^{(u)}(z)} + \delta z\\
    &< S_u + 3\delta z+ M_{T^{(u)}(z)}^{(u)} -z\\
    & = \sup\big\{S_w\colon |w|=q+ T^{(u)}(z), u\prec w\big\} - (1-3\delta)z\\
    &<\sup\big\{S_w\colon |w|=q+ T^{(u)}(z), u\prec w\big\}.
\end{align*}
This shows that, almost~surely, for all sufficiently large $z$, the particles at
the right-most position at time $q + T^{(u)}(z)$ are descendants of~$u$. This
completes the proof.
\end{proof}
In the remainder of the section, we prove the supporting results. 
\begin{proof}[Proof of Lemma~\ref{Lem:upper-bound-of-upper-deviation} Part~\eqref{dis_3}]

Under our assumptions, \cite[Proposition~2.1]{Mada17} guarantees that there exists a constant $c_1>0$ such that, for any $n\geq 1$, $\beta>1$, and $x\geq 1$, 
\[
\P\Big( n^{3\beta/2}\sum_{|u^{(r)}|=n}\e^{\beta(\theta_o S(u^{(r)})-\kappa(\theta_o))}>\e^{\beta x}\Big)
  < c_1(1+x)\e^{-x}.
\]
Now, observing that 
\[
\e^{\beta(\theta_o M_n^{(r)}-\kappa(\theta_o))}
  \leq \sum_{|u^{(r)}|=n}\e^{\beta(\theta_o S(u^{(r)})-\kappa(\theta_o))}
\]
and taking
$C_1 = \max\{c_1,\e/2 \} $,
we obtain that, for any $n\geq 1$ and for all $x\in\R$,
\begin{align}
\label{Equ:Tail-of-Mnr}
\P\big(M_n^{(r)}-m_n > x\big) < C_1(1+\theta_o x_+)\e^{-\theta_o x}.
\end{align}
Since the blue process starts at $\varrho$, a similar calculation yields that, for any $n\geq 1$ and for all $x\in\R$,
\begin{align}
\label{Equ:Tail-of-Mnb}
\P\big(M_n^{(b)}-m_n > x\big) < C_1(1+\theta_o (x-\varrho)_+)\e^{-\theta_o (x-\varrho)}.
\end{align}
Therefore, by a simple union bound, for $y,z>0$ we have
\begin{align}
   \P\big(\exists n \leq \e^{\theta_o z} \text{ such that } M_n^{(r)}-m_n > y+z\big) 
   &< \e^{\theta_oz}C_1(1+\theta_oy+ \theta_oz)\e^{-\theta_o (y+z)} \nonumber\\
   &= C_1(1+\theta_oy+ \theta_oz)\e^{-\theta_oy},
\end{align}
and for $y,z>0$ and $y+z>\varrho_+$,
\begin{align}
   \P\big(\exists n \leq \e^{\theta_o z} \text{ such that } M_n^{(b)}-m_n > y+z\big) 
   &< \e^{\theta_oz}C_1(1+\theta_oy +\theta_oz -\theta_o\varrho)\e^{-\theta_oy -\theta_oz +\theta_o\varrho} \nonumber\\
   &= C_1\e^{\theta_o\varrho}(1+\theta_oy +\theta_oz -\theta_o\varrho)\e^{-\theta_oy}.
\end{align}
This completes the proof.
\end{proof}
Before we prove~\eqref{dis_3b} we establish the following estimate.
Recall that $X_n$ represents the $n$-th generation point process. Here we use the assumption that there exists $s>0$ with $\kappa(-s)<\infty$.
\begin{lem} There exist $\Lambda>0$ large enough   and $c,c_1,c_2>0$ such that for all  $n$,
\begin{align*}
   \P\big( X_n^{(r)}([-n\Lambda,\infty)) <\e^{cn} \text{ or } X_n^{(b)}([-n\Lambda,\infty)) <\e^{cn} \big) 
   &<  c_1\e^{-c_2n}.
\end{align*}
\end{lem}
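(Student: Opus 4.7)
I would start from the pointwise decomposition
\[
X_n^{(r)}([-n\Lambda,\infty)) \ge X_n^{(r)}(\R) - X_n^{(r)}((-\infty,-n\Lambda)),
\]
which reduces the claim to
\[
\{X_n^{(r)}([-n\Lambda,\infty)) < \e^{cn}\}
\subseteq
\{X_n^{(r)}(\R) < 2\e^{cn}\} \cup \{X_n^{(r)}((-\infty,-n\Lambda)) \ge \e^{cn}\},
\]
together with its blue analogue. The goal is then to establish exponential decay, in $n$, for each of the four events and to union-bound.

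\textbf{Far-left stragglers.} For the second event I would apply the many-to-one lemma at a small negative tilt $-s \in (-\eta,0)$, which is legitimate by the finiteness of $\kappa$ near the origin supplied by Assumption~\textbf{(A2)}. Markov's inequality then yields
\[
\P\big(X_n^{(r)}((-\infty,-n\Lambda)) \ge \e^{cn}\big) \le \e^{-cn}\,\E\Big[\sum_{|u|=n} \e^{-s(S_u+n\Lambda)}\Big] = \e^{n(\kappa(-s) - s\Lambda - c)}.
\]
Choosing $\Lambda > (\kappa(-s)+c)/s$ makes this exponentially small. The identical computation handles the blue process using $\kappa_{\Psi_{\boldsymbol{e}}}(-s)$.

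\textbf{Lower bound on the total population.} Under Assumptions~\textbf{(A1)}--\textbf{(A2)} the process $n \mapsto X_n^{(r)}(\R)$ is a supercritical Galton--Watson process with mean $m = \e^{\kappa(0)} > 1$ that survives almost surely, and \textbf{(A3)} implies the $L\log L$ condition, so Kesten--Stigum gives $X_n^{(r)}(\R)/m^n \to W$ almost surely with $W > 0$ almost surely. To upgrade this to a quantitative estimate of the form $\P(X_n^{(r)}(\R) < 2\e^{cn}) \le C\e^{-c'n}$ for some $0 < c < \kappa(0)$, I would use a block argument: fix $k$ large and $1 < \rho < m$ so that with probability close to one a single individual produces at least $\rho^k$ descendants by generation $k$; then iterate across $\lfloor n/k\rfloor$ blocks. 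A Bernstein-type concentration inequality (whose hypotheses are provided by the exponential moments $\kappa(\theta) < \infty$ near $\theta = 0$ from \textbf{(A2)}) controls the blockwise multiplicative deviations, yielding the desired exponential tail. The same bound holds for blue.

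\textbf{Combination and main obstacle.} Choosing $c$ small and $\Lambda$ large so that both steps apply, and union-bounding over the four exceptional events (two per color), gives the statement with some $c_1, c_2 > 0$. The principal difficulty is the lower-tail bound on the Galton--Watson total population: Assumptions~\textbf{(A1)}--\textbf{(A3)} only furnish the almost-sure positivity of $W$, with no inherent rate, so the exponent $c_2$ must be extracted from a carefully tuned block iteration. In particular one must pick $k$ and $\rho$ with enough slack that the per-block concentration error, raised to the $\lfloor n/k\rfloor$-th power, still contributes a genuine exponential decay in $n$, while using only the moment information actually guaranteed by \textbf{(A2)}--\textbf{(A3)}.
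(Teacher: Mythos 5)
Your decomposition into two exceptional events per color (small total population, or too many particles to the left of $-n\Lambda$) and your treatment of the far-left stragglers via a negatively tilted many-to-one/Chernoff bound are exactly the paper's strategy; that part is correct as written, and the choice $\Lambda>(\kappa(-s)+c)/s$ with $s\in(0,\eta)$ is fine. The divergence is in the lower tail of the total population $X_n^{(r)}(\R)$, and this is where your sketch has genuine problems.

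First, a concrete error: finiteness of $\kappa(\theta)$ for $\theta$ near $0$ controls exponential moments of the \emph{displacements}, not of the offspring \emph{count}; at $\theta=0$ it only yields $\E[\Xi(\R)]<\infty$. So the hypotheses of a Bernstein-type inequality for the blockwise population increments are not ``provided by \textbf{(A2)}'', and \textbf{(A3)} likewise does not give the $L\log L$ condition for the offspring number (it is a moment condition on the weights tilted at $\theta_\Xi$), so the appeal to Kesten--Stigum is unjustified as stated. Both issues are avoidable: for a \emph{lower}-tail bound one may freely truncate the offspring number at $K$ (this only decreases the population, and the truncated mean satisfies $\kappa_K\uparrow\kappa(0)$), after which all moments are available; alternatively, lower-tail Chernoff bounds for sums of nonnegative variables need no upper moment hypotheses. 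The paper takes the truncation route and then invokes the local large-deviation theorem of Athreya--Ney for the truncated Galton--Watson process, splitting $\P\big(X^{(K)}_n<\e^{(\kappa_K-\epsilon/2)n}\big)$ into the local-LDP error plus $\P(0\le W\le \e^{-n\epsilon/2})$, the latter controlled by boundedness of the density of $W$.

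Second, the block iteration itself is left at the level of an acknowledged obstacle, and the missing piece is precisely the dangerous regime: when a block ``fails'' the population need not grow at all (it can remain equal to $1$ for $k$ generations with probability $\P(\Xi(\R)=1)^k$, which may be positive), and the event $\{X_n(\R)<\e^{cn}\}$ with $c<\kappa(0)$ is essentially the event that a positive fraction of the blocks fail. One must therefore show that order-$n$ many failures have exponentially small probability, which requires separating the small-population phase (where a single failure has constant probability, but persisting in that phase for $t$ generations costs $\e^{-c't}$) from the large-population phase (where the failure probability is exponentially small in the current population size). As written, the proposal does not supply this argument, so the exponent $c_2$ is not actually extracted. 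Either complete this two-phase analysis or, as in the paper, truncate and cite the Athreya--Ney local LDP.
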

\begin{proof} 
Since the argument proceeds in exactly the same way for both the red and the blue BRW, 
we omit the superscripts $(r)$ and $(b)$, which indicate colors, for notational simplicity.
As a first step we establish the following property for the associated Galton--Watson process. For all $\epsilon>0$, there exists $c_\epsilon>0$ such that for all sufficiently large $n$, we have that
\begin{align}\label{eq:GWLDP}
\P\big(X_n(\R)< \e^{(\kappa(0)-\epsilon)n}\big)\le \e^{-c_\epsilon n}.
\end{align}
In order to prove this, consider the Galton--Watson process based on the offspring law
$\Xi_K=\Xi(\R)\wedge K$ and note that for its expected offspring number we have the increasing limit $\kappa_K=\log\E[\Xi_K]\uparrow \kappa(0)=\log\E[\Xi(\R)]$. In particular, $\Xi_K$ has all moments and for $K$ sufficiently large,    
\begin{align*}
\P\big(X_n(\R)< \e^{(\kappa(0)-\epsilon)n}\big)\le\P\big(X^{(K)}_n< \e^{(\kappa_K-\epsilon/2)n}\big),
\end{align*}
where $X^{(K)}_n$ is the Galton-Watson process associated to $\Xi_K$.
Now, 
\begin{align*}
\P\big(X^{(K)}_n< \e^{(\kappa_K-\epsilon/2)n}\big)\le \sum_{\ell=1}^{\e^{(\kappa_K-\epsilon/2)n}}|\P(X^{(K)}_n=\ell)-\e^{-\kappa_Kn}\mathsf w(\e^{-\kappa_Kn}\ell)|+ \e^{-\kappa_Kn}\sum_{\ell=1}^{\e^{(\kappa_K-\epsilon/2)n}} \mathsf w(\e^{-\kappa_Kn}\ell),
\end{align*}
where $\mathsf w$ is the density of the non-trivial almost-sure limiting distribution for which we have $\e^{-\kappa_Kn}X^{(K)}_n\to W$. Performing a change of variable $k=\e^{-\kappa_Kn}\ell$, we can bound the second summand by a constant times
\begin{align*}
\P(0\le W\le \e^{-n\epsilon/2}),
\end{align*}
which tends to zero exponentially fast since $W$ has a continuous and therefor bounded density. 
For the first summand, by the local large-deviation principle~\cite[Theorem 1, Page 80]{athreya:ney:1972}, we can bound
\begin{align*}
 \sum_{\ell=1}^{\e^{(\kappa_K-\epsilon/2)n}}|\P(X^{(K)}_n=\ell)-\e^{-\kappa_Kn} \mathsf w(\e^{-\kappa_Kn}\ell)|&\le C \sum_{\ell=1}^{\e^{(\kappa_K-\epsilon/2)n}}\frac{\beta^{-n}}{\ell}+\e^{-\kappa_Kn}\beta_o^{-n}\\
 &\le C'\beta^{-n}(\kappa_K-\epsilon/2)n+\e^{-\kappa_Kn}\beta_o^{-n},
\end{align*}
where, $C,C'>0$, $\beta>1$ and we can choose $1<\beta_o<\beta$. 
Then, the second summand as well as the first summand decay exponentially. This establishes~\eqref{eq:GWLDP}.

In order to prove the main statement, represent $\Lambda=\kappa'(\theta_o)-\beta$, consider the interval
\[
I_n:= \big(n(\kappa'(\theta_o)-\beta), \infty\big)
\]
and observe that, by Markov's inequality, for $c<\kappa(0)-\epsilon$, 
\begin{align*}
\P\Big(\sum_{|v|=n}\mathbf 1\{S_v\in I_n \}<\e^{c n}, X_n(\R)\ge \e^{(\kappa(0)-\epsilon)n}\Big)
&\le \P\Big(\sum_{|v|=n}\mathbf 1\{S_v\not\in I_n\}\ge \e^{(\kappa(0)-\epsilon)n}-\e^{c n}\Big)\\
&\le \big(\e^{(\kappa(0)-\epsilon)n}-\e^{c n}\big)^{-1}\E\Big[\sum_{|v|=n}\mathbf 1\{S_v\not\in I_n\}\Big].
\end{align*} 
Now, for any $x\in\R$ and any $s>0$,  $\mathbf 1\{x\leq (\kappa'(\theta_o)-\beta)k\}\le \exp(-s(x-(\kappa'(\theta_o)-\beta)k))$ and therefore,
\begin{align*}
   \E\Big[\sum_{|v|=n}\mathbf 1\{S_v\not\in I_n\}\Big]
   \le \E\Big[\sum_{|v|=n}\e^{-s(S_v-(\kappa'(\theta_o)-\beta)n)}\Big]
   = \e^{(s(\kappa'(\theta_o)-\beta)+\kappa(-s))n}.
\end{align*}
Choosing $\beta$ sufficiently large  completes the proof.
\end{proof}

\begin{proof}[Proof of Lemma~\ref{Lem:upper-bound-of-upper-deviation} Part~\eqref{dis_3b}]
Again, since the argument proceeds in exactly the same way for both the red and the blue BRW, we omit the superscripts $(r)$ and $(b)$, which indicate colors, for notational simplicity.
For an individual $u$ we write
\[
M_n^{(u)} := \sup\{S_v-S_u\colon |v|=n+|u|, u\prec v\}.
\]
Take $\delta>0$ and $\alpha\in(0,1)$. We observe that, for all $u$ with $|u| = \floor{\alpha z}$, the sequences $\{M_n^{(u)}\}_{n \ge 1}$ are mutually independent.
Therefore,
\begin{align*}
    &\P\big(\forall n \leq \e^{\theta_o z}\colon M_n-m_n < (1-\delta)z \big)\\ 
   &\leq   \P\big(X_{\floor{\alpha z}}([-\floor{\alpha z}\Lambda,\infty)) \geq \e^{\floor{\alpha z}}\big. \\
   & \qquad\,\,\,\,\big. \text{ and } \forall u \text{ with $|u|=\floor{\alpha z}$ and $S_u>-\floor{\alpha z}\Lambda $, }  \forall n \leq \e^{\theta_o z}-\floor{\alpha z}\colon M_n^{(u)}-m_{n+\floor{\alpha z}} -\floor{\alpha z}\Lambda < (1-\delta)z  \big)\\
   &\qquad +\P\big( \calX_{\floor{\alpha z}}([-\floor{\alpha z}\Lambda,\infty)) <\e^{\floor{\alpha z}} \big)\\
   &\leq \P\big(  \forall n \leq \e^{\theta_o z}-\floor{\alpha z}\colon M_n-m_{n+\floor{\alpha z}} -\floor{\alpha z}\Lambda < (1-\delta)z  \big)^{\e^{\floor{\alpha z}}} + c_1\e^{-c_2\floor{\alpha z}}\\
   &\leq \P\big(  \forall n \leq \e^{\theta_o z}-\floor{\alpha z}\colon M_n-m_{n} < \alpha z(\Lambda +\kappa'(\theta_o))+  (1-\delta)z  \big)^{\e^{\floor{\alpha z}}} + c_1\e^{-c_2\floor{\alpha z}}.
\end{align*}
Taking $\alpha:= (\Lambda +\kappa'(\theta_o))^{-1}\delta/2$, we get that
\begin{align}
\label{Equ:Upper-bound-missing-Second-part}
    \P\big(\forall n \leq \e^{\theta_o z}\colon M_n - m_n < (1-\delta)z \big)
    \leq \P\big( \forall n \leq \e^{\theta_o z}-\floor{\alpha z}\colon 
        M_n - m_n < (1-\delta/2) z \big)^{\e^{\floor{\alpha z}}}
        + c_1 \e^{-c_2 \floor{\alpha z}} .
\end{align}
Now, by~\cite[Equation~(4.6)]{HuSh09}\footnote{ In \cite{HuSh09}, the authors assume that $\E[\Xi(\mathbb{R})^{1+\varepsilon}]<\infty$
for some $\varepsilon>0$. However, this assumption is not used in the derivation of
\cite[Equation~(4.6)]{HuSh09}, which arises only as an intermediate step in their proof.
},
we know that for any $b\in\R$
and $\epsilon>0$, all sufficiently large $\ell_1$ and all $\ell_2\in[\ell_1,2\ell_1]\cap\Z$,
\begin{align*}
   \P\big(\max_{\ell_1\leq n \leq \ell_2} \theta_o M_n- n\kappa(\theta_o)\geq - b\log \ell_1 \big) \geq \frac{\ell_2-\ell_1+1}{\ell_1^\epsilon(\ell_2-\ell_1+1)+\ell_1^{3/2-b+\epsilon}}.
\end{align*}
    Taking, $\ell_2:=2\ell_1-1$ and using $\kappa(\theta_o)=\theta_o\kappa'(\theta_o)$, this yields that for any $b\in\R$, $\epsilon>0$ and all sufficiently large $\ell_1$
\begin{align*}
   \P\left(\max_{\ell_1\leq n < 2\ell_1}  M_n- m_n\geq \left(\frac{3}{2}-b\right) \frac{1}{\theta_o}\log \ell_1 \right) \geq \frac{\ell_1^{-\epsilon}}{1+\ell_1^{1/2-b}}.
\end{align*}
Thus, taking $b:=1/2+\delta/4$ and $\ell_1:=\e^{\theta_oz}/4$, we obtain that for any $\epsilon>0$ and sufficiently large $z$,
\begin{align}
\label{Equ:lower-bound-missing-Second-part}
\P\Big(\max_{\e^{\theta_oz}/4\leq n < \e^{\theta_oz}/2}  M_n- m_n\geq (1-\delta/2) z \Big) \geq \e^{-\epsilon\theta_oz}.
\end{align}
Choosing $\epsilon<\alpha/(3\theta_o)$ and applying~\eqref{Equ:lower-bound-missing-Second-part} to~\eqref{Equ:Upper-bound-missing-Second-part} yields that for all sufficiently large $z$,
\begin{align*}
    \P\big(\forall n \leq \e^{\theta_o z}\colon M_n - m_n < (1-\delta)z \big)
    &\leq \big( 1- \e^{-\epsilon\theta_oz} \big)^{\e^{\floor{\alpha z}}}
        + c_1 \e^{-c_2 \floor{\alpha z}} \leq c_3 \e^{-\e^{\alpha z/2}} + c_1 \e^{-c_2 \floor{\alpha z}} 
       \leq c_4 \e^{- c_5 \delta z},
\end{align*}
which completes the proof.
\end{proof}

\begin{proof}[Proof of Proposition~\ref{Prop:Maximum-advantage-of-blue-before-Trz}]
Let us define the following three events
\begin{align*}
    \calA_z &:= \big\{ \e^{(1-\delta)\theta_o z} \le T^{(r)}(z) \le \e^{(1+\delta)\theta_o z}\big\},\\
    \calB_z &:=\big\{ M_n^{(b)} \le m_n + (1+2\delta) z \text{ for all } n \le T^{(r)}(z) - \floor{z}^2\big \}, \text{ and}\\
    \calC_z &:=\big\{  M_n^{(b)} \le m_n + \delta z \text{ for all } n \in ( T^{(r)}(z) - \floor{z}^2 ,T^{(r)}(z)] \big\}.
\end{align*}
We now observe what happens on the event $\calA_z\cap \calB_z \cap \calC_z $ for sufficiently large $z$. First, since the sequence $\{m_n\}_{n\ge 1}$ is eventually increasing, for sufficiently large $z$, we have
\[
m_{T^{(r)}(z)} = \sup\big\{m_n \colon n\le T^{(r)}(z)\big \}
\qquad\text{ and }\qquad
m_{T^{(r)}(z)- \floor{z}^2} = \sup\big\{m_n \colon n\le T^{(r)}(z)- \floor{z}^2 \big\}.
\]
Therefore, on the event $\calA_z\cap \calB_z \cap \calC_z $,  for all $n\le T^{(r)}(z)- \floor{z}^2$, we have
\begin{align*}
   M_n^{(b)} &\le  m_{T^{(r)}(z)- \floor{z}^2} +(1+2\delta)z\\
   & = (T^{(r)}(z)- \floor{z}^2)\kappa'(\theta_o) -\frac{3}{2\theta_o}\log (T^{(r)}(z)- \floor{z}^2) +(1+2\delta)z\\
   & = m_{T^{(r)}(z)} - \floor{z}^2\kappa'(\theta_o) -\frac{3}{2\theta_o}\log (T^{(r)}(z)- \floor{z}^2) +\frac{3}{2\theta_o}\log (T^{(r)}(z)) +(1+2\delta)z\\
   & < m_{T^{(r)}(z)} - \floor{z}^2\kappa'(\theta_o)  +\frac{3}{2} (1+\delta)z +(1+2\delta)z\\
   & < m_{T^{(r)}(z)},
\end{align*}
for sufficiently large $z$, since $\kappa'(\theta_o)>0$. Also, note that on the event $\calA_z\cap \calB_z \cap \calC_z $, for all $n$ such that $ T^{(r)}(z) - \floor{z}^2 <n \le T^{(r)}(z)$,
\[
M_n^{(b)} \le m_{T^{(r)}(z)} + \delta z .
\]
Therefore, on the event $\calA_z\cap \calB_z \cap \calC_z $, for all sufficiently large $z$, we have
\[
\sup\big\{ M_n^{(b)} \colon n\leq T^{(r)}(z)\big\} \le m_{T^{(r)}(z)}+\delta z.
\]
This implies that for all sufficiently large $z$,
\begin{align}
    \P\big(\sup\big\{ M_n^{(b)} \colon n\leq T^{(r)}(z)\big\} > m_{T^{(r)}(z)}+\delta z\big)< \P(\calA_z^\complement\cup \calB_z^\complement \cup \calC_z^\complement).
    \label{Equ:bound-by-prob-of-bad-events}
\end{align}
But, by Lemma~\ref{Lem:upper-bound-of-upper-deviation}, for all sufficiently large $z$ we have  
\[
\P(\calA_z^\complement) < C\e^{-\theta_o\delta z/2}.
\] 
Next, observe that, again by Lemma~\ref{Lem:upper-bound-of-upper-deviation} for all sufficiently large $z$,
\begin{align*}
   \P({\calA_z}\cap \calB_z^\complement) 
   \le \P\big(\exists n \leq \e^{(1+\delta)\theta_o z}\colon M_n^{(b)}-m_n > (1+2\delta)z  \big) < C\e^{-\theta_o\delta z/2}.
\end{align*}
Finally, since the red and the blue processes are independent, the estimate~\eqref{Equ:Tail-of-Mnb} together with a simple union bound, implies that for all sufficiently large $z$,
\begin{align*}
 \P(\calC_z^\complement)  
& = \P\big(\exists n \in ( T^{(r)}(z) - \floor{z}^2 ,T^{(r)}(z)]\colon M_{n}^{(b)} >  m_n + \delta z\big)\le  \floor{z}^2 C_1 \e^{-\theta_o \delta z /2 }.
\end{align*}
These estimates together with~\eqref{Equ:bound-by-prob-of-bad-events} finish the proof.
\end{proof}

\subsection{Proofs for non-coexistence}
By the bounded support of the step variables, their log-Laplace transforms
\[
\phi_r(\theta):= \log\E\big[\e^{\theta \xi^{(r)}_1}\big] \qquad \text{ and } \qquad \phi_b(\theta):= \log\E\big[\e^{\theta \xi^{(b)}_1}\big],
\]
are finite for all $\theta\in\R$. Note that, by independence, $\kappa_r(\theta):=\log\E\Big[\sum_{j=1}^{N_r}\e^{\theta \xi^{(r)}_j}\Big]=\log\E[N_r]+\phi_r(\theta)$ and $\kappa_b(\theta):=\log\E\Big[\sum_{j=1}^{N_b}\e^{\theta \xi^{(b)}_j}\Big]=\log\E[N_b]+\phi_b(\theta)$. 
\begin{proof}[Proof of Proposition~\ref{Prop:counter}]
   Let $x>0$ be such that $\P(\xi^{(r)}_1>x)\wedge\P(\xi^{(b)}_1>x)>0$. Then, using convexity of $\phi_r$ and $\phi_b$, it is immediate that
    \[
    \lim_{\theta\uparrow\infty} \kappa_r'(\theta)=\lim_{\theta\uparrow\infty} \phi_r'(\theta)= \lim_{\theta\uparrow\infty} \frac{\phi_r(\theta)}{\theta}>x \qquad \text{ and } \qquad
    \lim_{\theta\uparrow\infty} \kappa_b'(\theta)=\lim_{\theta\uparrow\infty} \phi_b'(\theta)= \lim_{\theta\uparrow\infty} \frac{\phi_b(\theta)}{\theta}>x.
    \]
Since $\phi_r'(0) = 0 = \phi_b'(0)$, by the intermediate value theorem, we choose $\theta_r$ and $\theta_b$ such that $\phi_r'(\theta_r) = x = \phi_b'(\theta_b)$. Further, since $\phi_r$ and $\phi_b$ are strictly convex, $\theta\mapsto\theta\phi_r'(\theta)-\phi_r(\theta)$ and $\theta\mapsto\theta\phi_b'(\theta)-\phi_b(\theta)$ are strictly increasing functions. This implies $\theta_r\phi_r'(\theta_r)-\phi_r(\theta_r)>0$ and also $\theta_b\phi_b'(\theta_b)-\phi_b(\theta_b)>0$. We now choose $N_r$ and $N_b$ almost~surely bounded and such that
\[
\log E[N_r] = \theta_r\phi_r'(\theta_r)-\phi_r(\theta_r)
\qquad \text{ and } \qquad
\log E[N_b] = \theta_b\phi_b'(\theta_b)-\phi_b(\theta_b).
\]
Then, $\kappa_r(\theta_r)=\theta_r\kappa'_r(\theta_r)$ and $\kappa_b(\theta_b)=\theta_b\kappa'_b(\theta_b)$ and, using~\cite[Theorem~1]{Bigg76}, we thus get that almost~surely
\[
\lim_{n\uparrow\infty} \frac{M_n^{(r)}}{n} = \kappa_r'(\theta_r)=x
\qquad \text{ and } \qquad
\lim_{n\uparrow\infty} \frac{M_n^{(b)}}{n} = \kappa_b'(\theta_b)=x,
\]   
implying~\eqref{Eq:FirstOrder}.
\end{proof}

\begin{proof}[Proof of Proposition~\ref{Prop:counter2}]
Adjusting the expected values of the offspring distributions as in the proof of Proposition~\ref{Prop:counter}, from~\cite[Theorem~1.2]{HuSh09}, we then know that almost~surely
\begin{align*}
   &\limsup_{n\uparrow \infty} \frac{M_n^{(r)}-xn}{\log n} = -\frac{1}{2\theta_r}, &  \limsup_{n\uparrow \infty} \frac{M_n^{(b)}-xn}{\log n} = -\frac{1}{2\theta_b},\\
   &\liminf_{n\uparrow \infty} \frac{M_n^{(r)}-xn}{\log n} = -\frac{3}{2\theta_r}, & \liminf_{n\uparrow \infty} \frac{M_n^{(b)}-xn}{\log n} = -\frac{3}{2\theta_b}.
\end{align*}
Note that we still have freedom to further adjust $x$ and the distribution of $\xi^{(r)}_1$ such that $-1/(2\theta_r)< -3/(2\theta_b)$, or equivalently, $\theta_b>3 \theta_r$.
To do this, we define
\[
g(\theta):= \phi_r'(\theta)-\phi_b'(3\theta)
\]
and note that, by symmetry and assuming that $\E[(\xi_1^{(r)})^2]>3$, we have that
\begin{align*}
    g(0)&=\phi_r'(0)-\phi_b'(0)=\E[\xi_1^{(r)}]- \E[\xi_1^{(b)}]=0 \quad \text{and}\quad
    g'(0)= \phi_r''(0)-3\phi_b''(0) = \E[(\xi_1^{(r)})^2]- 3\E[(\xi_1^{(b)})^2] >0.
\end{align*}
Therefore,  there exists $\alpha>0$ such that 
\[
g(\alpha)= \phi_r'(\alpha)-\phi_b'(3\alpha) >0.
\]
Setting $\theta_b=3\alpha$ and $x= \phi_b'(\theta_b)$, this implies that
\[
\phi_r'(0)=0< x  < \phi_r'(\alpha),
\]
and hence, there exists $\theta_r\in(0,\alpha)$ such that $x=\phi_r'(\theta_r)$. But this implies that $\theta_b = 3\alpha> 3 \theta_r$, as desired. As in the proof of Proposition~\ref{Prop:counter}, we now choose $N_r$ and $N_b$ almost~surely bounded and such that
\[
\log \E[N_r] = \theta_r\phi_r'(\theta_r)-\phi_r(\theta_r)
\qquad \text{ and } \qquad
\log \E[N_b] = \theta_b\phi_b'(\theta_b)-\phi_b(\theta_b).
\]
Then, by~\cite[Theorem~1.2]{HuSh09}, we obtain that almost~surely,
\begin{align*}
    \limsup_{n\uparrow \infty} \frac{M^{(r)}_n-xn}{\log n} = -\frac{1}{2\theta_r} < 
-\frac{3}{2\theta_b}=
\liminf_{n\uparrow \infty} \frac{M^{(b)}_n-xn}{\log n}, 
\end{align*}
which implies that almost~surely,
\begin{align}\label{eqn:speed_difference}
 \liminf_{n\uparrow \infty} \frac{M^{(b)}_n-M^{(r)}_n}{\log n}=  \frac{1}{2\theta_r} - \frac{3}{2\theta_b}=2c>0.
\end{align}
By symmetry, the same applies also to the left-most particles $L^{(r)}_n,L^{(b)}_n$. 
In order to finish the proof, denote by $(\Omega,\mathcal F, \P)$ the  probability space for our random experiment. Then by~\eqref{eqn:speed_difference}, there exists a nullset $\calN\in \mathcal F$, i.e., $\P(\calN)=0$, such that for all $\omega\notin\calN$ there exists $n_{\omega}$ such that for all $n\geq n_{\omega}$, $M^{(b)}_n-M^{(r)}_n>c\log n$ and $L^{(r)}_n-L^{(b)}_n>c\log n$.
Now, since almost~surely $|\xi^{(b)}_1|=1$, 
for all $n\geq n'_{\omega}= n_{\omega}\vee\exp(M/c)$, all the next $M$ sites to the right of $M^{(r)}_n$ and all the next $M$ sites to the left of $L^{(r)}_n$ must be colored blue, which means no new site will be colored red at time $(n+1)$. This implies that all the sites not in $[L^{(r)}_{n'_{\omega}}, M^{(r)}_{n'_{\omega}}]$ must be colored blue eventually. This completes the proof.
\end{proof}

\begin{proof}[Proof of Theorem~\ref{Thm:counter}] 
     The first part of the proof proceeds precisely as the proof of Proposition~\ref{Prop:counter2} above. However, we need additional arguments that replace the last steps in that proof since we heavily used the fact that the blue BRW leaves no holes. In order to overcome this, we restrict our attention to the case where 
$\xi^{(b)}_1$ is uniformly distributed on $\{-2,-1,1,2\}$, and where 
$N_b\ge 2$ and bounded almost~surely with $\E[N_b] < 4$.
 This is only to simplify calculation as much as possible and we believe that generalizations to other random-walk and offspring distributions are possible. The requirement $\E[N_b] < 4$ ensures the existence of $\theta_b$, and is therefore crucial for our argument. To see this, observe that by \cite[Proposition~3.3.2]{Ghosh22}, $\theta_b$ exists iff 
     \[
     \lim_{\theta\uparrow\infty}\kappa_b(\theta) - \theta \big(\lim_{x\uparrow\infty} \kappa'(x)\big)<0.
     \]
     Note that
     \[
     \lim_{x\uparrow\infty}\kappa_b'(x) = \lim_{x\uparrow\infty}\phi_b'(x)
     =\lim_{x\uparrow\infty}\frac{\phi_b(x)}{x}=2.
     \]
     Now, 
     \begin{align*}
         \lim_{\theta\uparrow\infty}\kappa_b(\theta) - 2\theta
         &= \lim_{\theta\uparrow\infty}\log\E[N_b] -\log 4 + \log (1+ \e^{-\theta} +\e^{-3\theta} +\e^{-4\theta})= \log\E[N_b] -\log 4 <0.
     \end{align*}
     This implies that there exists $\theta_b>0 $ such that $\kappa_b(\theta_b)=\theta_b\kappa_b'(\theta_b)$.

     We now choose $\xi^{(r)}_1$ to be uniformly distributed on $\{-M,-1,1,M\}$, for some $M\geq 2$. Then for any $\theta>0$,
     \[
     \phi_r(\theta)=  \log\Big( \frac{1}{4}\e^{\theta} + \frac{1}{4}\e^{-\theta}+ \frac{1}{4}\e^{M\theta}+ \frac{1}{4}\e^{-M\theta}\Big),
     \]
     and, therefore,
     \[
     \phi_r'(\theta)= \frac{\e^{\theta}-\e^{-\theta}+M\e^{M\theta}-M\e^{-M\theta}}{\e^{\theta}+\e^{-\theta}+\e^{M\theta}+\e^{-M\theta}},
     \]
     which goes to $\infty$ as $M\uparrow\infty$, for every $\theta>0$. We can then choose $M$ large enough such that $\phi_r'(\theta_b/3)> \kappa_b'(\theta_b)$. Since $\phi_r'(0)=0$, by the intermediate value theorem, there exists $\theta_r\in(0,\theta_b/3)$ such that  $\phi_r'(\theta_r)= \kappa_b'(\theta_b)$. Then, as in the proof of  Proposition~\ref{Prop:counter}, observing that $\theta_r\phi_r'(\theta_r) -\phi_r(\theta_r)>0$,  we can choose $N_r$, almost~surely bounded such that
     \[
     E[N_r]= \theta_r\phi_r'(\theta_r) -\phi_r(\theta_r). 
     \]
     This makes $\kappa_r(\theta_r)=\theta_r\kappa_r'(\theta_r)$ and, by construction, we have $3\theta_r<\theta_b$. Therefore, both~\eqref{Eq:FirstOrder} and~\eqref{eqn:speed_difference} holds for $M^{(r)}_n,M^{(b)}_n$, respectively by symmetry also for $L^{(r)}_n,L^{(b)}_n$.
   
    Moreover, as in the final step in the proof of Proposition~\ref{Prop:counter2}, there exists a nullset $\calN\in \mathcal F$ such that for all $\omega\notin\calN$ there exists $n_{\omega}$ such that for all $n\geq n_{\omega}$, $M^{(b)}_n-M^{(r)}_n>c\log n$ and $L^{(r)}_n-L^{(b)}_n>c\log n$. 
Now, since almost~surely $|\xi^{(b)}_1|\le 2$, 
for all $n\geq n'_{\omega}= n_{\omega}\vee\exp(M/c)$, at least every other site within the next $M$ sites to the right of $M^{(r)}_n$, respectively to the left of $L^{(r)}_n$, must be colored blue. 
In particular, with 
\begin{align*}
\tau^{(b)}(k)=\inf\{n\ge 0\colon M^{(b)}_n\in \{k, k+1\}\}\qquad\text{and}\qquad\tau^{(r)}(k)&=\inf\{n\ge 0\colon M^{(r)}_n\in \{k, k+1\}\},
\end{align*}
we have that $\{\tau^{(b)}(k)<\tau^{(r)}(k)\}$ for all but finitely many $k\ge 0$ with probability one. 

Further, in order to ensure that blue leaves no holes, the idea is the following. Due to the second-order advantage of the blue BRW, at the time when blue hits a position $k$, it has some time before the red BRW catches up. This time is enough to also color a previously uncolored neighboring site of $k$. In  order to implement this precisely, consider the events 
\begin{align*}
H_{k}=\{\text{the set }\{k,k+1\}\text{ is entirely colored blue}\}.
\end{align*}
By symmetry, it suffices to show that $H_k$ occurs for all but finitely many $k\ge 0$.  
In view of the Borel--Cantelli lemma, it suffices to show that 
\begin{align}\label{Equ:a1}
\sum_{k\ge 0}\P\big(H^{\complement}_k, M^{(b)}_{\tau^{(b)}(k)}-M^{(r)}_{\tau^{(b)}(k)}\ge c\log \tau^{(b)}(k),\tau^{(b)}(k)<\tau^{(r)}(k) \big)
<\infty.
\end{align}
In words, at the time $\tau^{(b)}(k)$, blue hits the set $\{k,k+1\}$ for the first time and does this before red. Additionally, the right-most blue particle is at least $c\log \tau^{(b)}(k)$ steps to the right of the right-most red particle, but still does not manage to hit the entire set $\{k,k+1\}$ before red. But this is highly unlikely. 

Indeed, the event in~\eqref{Equ:a1} implies that a BRW started in position $M^{(b)}_{\tau^{(b)}(k)}\in \{k,k+1\}$ does not reach $\{k,k+1\}\setminus M^{(b)}_{\tau^{(b)}(k)}$ in $c\log \tau^{(b)}(k)$ time steps. Hence, the left-hand side of~\eqref{Equ:a1} can be bounded by 
\begin{align}\label{Equ:a2}
2\sum_{k\ge 1}\P\big(\forall i\le c\log k\colon X^{(b)}_i(1)=0\big),
\end{align}
where we use that $\tau^{(b)}(k) \geq \floor{k/2}$ almost~surely.
In order to bound~\eqref{Equ:a2} note that, at time $\lfloor\ell^{1/3}\rfloor$ with  $\ell= \floor{c\log k}$, blue has at least $2^{\lfloor\ell^{1/3}\rfloor}$  many particles in positions contained in $K_{\ell}=\{ -2\lfloor\ell^{1/3}\rfloor,\ldots, 2\lfloor\ell^{1/3}\rfloor \}$. 
We show that, even when starting independent random walks from these positions, 
each with increment distribution equal to that of $\xi^{(b)}_1$, we reach $1$ 
with high probability. For branching random walks, this probability is even higher.
For a random walk, the probability of touching or crossing $0$ decreases as the 
starting position moves farther away from $0$. Furthermore, when it touches or crosses $0$ without having visited $1$ yet,
it can reach $1$ in the next step with probability $1/4$.

Now, let $\{\mathsf S_n\}_{n\geq 0}$ be a random walk with with increment distribution equal to that of $\xi^{(b)}_1$ and $\mathsf S_0= 2\lfloor\ell^{1/3}\rfloor$. By~\cite[Lemma~4.1]{AiSh10},
\begin{align*}
    \P\big(\forall n\leq \ell-\lfloor\ell^{1/3}\rfloor\colon \mathsf S_n\geq 1\big) \leq \frac{2\lfloor\ell^{1/3}\rfloor}{(\ell-\lfloor\ell^{1/3}\rfloor)^{1/2}}<\epsilon,
\end{align*}
for all sufficiently large $\ell$, for some fixed $\epsilon \in (0,1)$.
 Therefore, we obtain that
\begin{align*}
    \P\big(\forall i\le c\log k\colon X^{(b)}_i(1)=0\big) \leq \big(1- (1-\epsilon)/4 \big)^{2^{\lfloor\ell^{1/3}\rfloor}}
    = \big(1- (1-\epsilon)/4 \big)^{2^{\lfloor{\floor{c\log k}}^{1/3}\rfloor}}. 
\end{align*}
Finally, the fact that the right-hand side is summable in $k$ can be seen by a suitable change of variable. 
\end{proof}

\section*{Acknowledgement} 
We thank Johannes B\"aumler for his input on the occasion of the workshop {\em Recent Advances in Evolving and Spatial Random Graphs} in Bayrischzell in June 2025 as well as Bastien Mallein for indicating a crucial reference. We also thank Maria Deijfen for pointing us to several relevant references.
BJ received support by the Leibniz Association within the Leibniz Junior Research Group on \textit{Probabilistic Methods for Dynamic Communication Networks} as part of the Leibniz Competition (grant no.\ J105/2020). The research of BJ is funded by Deutsche Forschungsgemeinschaft (DFG) through the SPP2265 within the Project P27.

\end{document}